\DeclareMathOperator\Real {Re}
\def\Re {{\mathbb R}}
\def\N{{\mathbb N}}
\begin{document}

\title{BDF$6$ SAV schemes for  time-fractional Allen-Cahn dissipative systems
}


\author{Fan Yu         \and
        Minghua Chen
}


\institute{F. Yu  \and  M. Chen (\Envelope)  \at
              School of Mathematics and Statistics, Gansu Key Laboratory of Applied Mathematics and Complex Systems, Lanzhou University, Lanzhou 730000, P.R. China\\
email: yuf20@lzu.edu.cn; chenmh@lzu.edu.cn
}


\maketitle

\begin{abstract}
Recently, the error analysis of BDF$k$ $(1\leqslant k\leqslant5)$ SAV (scalar auxiliary variable) schemes are given in \cite{Huangg:20} for the classical Allen-Cahn equation.
However, it remains unavailable for BDF$6$ SAV schemes.  In this paper, we construct and analyze BDF$6$ SAV schemes for the time-fractional dissipative systems. We carry out a rigorous error analysis for the time-fractional Allen-Cahn equation, which also fills up a gap for the classical case.
Finally, numerical experiment is shown to illustrate the effectiveness of the presented methods.
As far as we know, this is the
first  SAV schemes for the time-fractional dissipative systems.
\keywords{BDF$6$ method \and scalar auxiliary variable  \and time-fractional dissipative systems \and error analysis}
\end{abstract}

\section{Introduction}
The scalar auxiliary variable (SAV) approach was first proposed in \cite{Shen:18,Shen:19}, which is a powerful approach
to construct efficient time discretization schemes for gradient flows and to deal with the nonlinear terms in the dissipative systems. In recent years, the approach has attracted more and more attention and has been applied to various problems. Recently, the SAV approach coupled with extrapolated and linearized Runge-Kutta methods was considered for the Allen-Cahn and Cahn-Hilliard equations in \cite{Akrivis:19}.

Note that the unconditional energy stability can only be
established for the A-stable one- and two-step BDF methods for the original SAV approach in \cite{Shen:18,Shen:19}. However, it is well known that the BDF$k$ $(k\geqslant3)$ methods are not A-stable.
It is very wonderful that the error analysis is carried out for general dissipative systems in \cite{Huangg:20}, where the powerful Nevanlinna-Odeh multipliers for BDF$k$ $(1\leqslant k\leqslant5)$  play a key role. In contrast, it has been proved that the Nevanlinna-Odeh multipliers for the BDF6 method do not exist in \cite{ACYZ:20}. Fortunately, a class of six-step simple multipliers are proposed in \cite{ACYZ:20}, which makes the energy technique applicable to the error analysis of BDF$6$ SAV schemes.

The conventional Allen-Cahn equation \cite{Allen:79} was originally
developed as models for some material science applications. It has been
widely used in fluid dynamics to describe moving interfaces through a phase-field
approach \cite{Anderson:98}. In recent years, there are many researches on the time-fractional Allen-Cahn equation, where the first order time derivative is replaced by a Caputo fractional derivative with order $\alpha\in(0,1)$. In \cite{Du:20}, the Caputo fractional derivative is discretized by backward Euler method and the convergence rate $\mathcal{O}(\tau^\alpha)$ is proved for the time-fractional Allen-Cahn equation.
The authors of \cite{Tang:19} adopt $L1$ schemes  and prove the energy stability for the time-fractional Allen-Cahn equation.

In comparison with the error analysis of BDF$k$ $(1\leqslant k\leqslant5)$ SAV schemes for classical Allen-Cahn equation in \cite{Huangg:20}, the error analysis of BDF$6$ SAV schemes remains unavailable, which is the main motivation of the present work. In this paper, we apply the six-step simple multiplier in \cite{ACYZ:20} and the SAV approach in \cite{Huang:20,Huangg:20} to construct the explicit-implicit BDF$6$ SAV schemes
for time-fractional  dissipative systems. We show that the proposed BDF$6$ SAV schemes are unconditional energy stable. The main purpose of the present work is to carry out a rigorous error
analysis of BDF$6$ SAV schemes  for the time-fractional Allen-Cahn equation. To the best of our knowledge, this is the first proof for the error analysis of SAV schemes for the time-fractional Allen-Cahn equation.

An outline of the paper is organized as follows. In the next section, we construct the BDF$6$ SAV schemes
for the time-fractional dissipative systems in a unified form and prove the proposed schemes are unconditionally energy stable. In Section 3, we recall and prove some useful lemmas for the BDF$6$ method that are needed for the error analysis.  In section 4,  we present the detailed proof for the error analysis of BDF$6$ SAV schemes for the time-fractional Allen-Cahn equation. We provide numerical experiment to demonstrate the theoretical results in the last section.

\section{BDF$6$ SAV schemes for  time-fractional dissipative systems}\label{sec:1}
We use the following notations throughout the paper. Let $\Omega\in\mathbb{R}^d(d=1,2,3)$ be a bounded domain
with sufficiently smooth boundary. Let $\| \cdot \|$  denote the norm on $L^2(\Omega)$ induced by the inner product $(\cdot , \cdot )$ and $\| \cdot \|_{H^s}$  denote the norm on the usual Sobolev spaces $H^s(\Omega)$.  To simplify the notation, we denote $u(x,t)$ by $u(t)$ and use $C$
to denote a constant which is independent on the step size $\tau$.


Let $T >0$ and consider the following time-fractional dissipative systems \cite{Tang:19}
\begin{equation} \label{2.1}
\partial_t^{\alpha}\left(u-u_0\right)+\mathcal{A}u+f(u)=0,~~0<t<T,
\end{equation}
\begin{equation} \label{2.2}
\frac{d\tilde{E}(u)}{dt}=-\mathcal{K}(u),
\end{equation}
where $\mathcal{A}$  is a positive definite, selfadjoint, linear operator on $L^2(\Omega)$ and $f(u)$ is a nonlinear operator, with the initial condition $u(0)=u_0$ and the homogeneous Dirichlet boundary condition. The model \eqref{2.1} satisfies a dissipative energy law \eqref{2.2}, where $\tilde{E}(u)>-C_0$ for all $u$ is an energy functional, $\mathcal{K}(u)>0$ for all $u\neq0$.
Here the operator $\partial_t^\alpha$, with $\alpha\in(0,1)$, denotes the left-sided
Riemann-Liouville fractional derivative in time \cite{Podlubny:99}
\begin{equation*}
\partial_t^\alpha u(t)=\frac{1}{\Gamma(1-\alpha)} \frac{d}{dt} \int_{0}^t (t-s)^{-\alpha}  u(s)ds.
\end{equation*}
Under the initial condition $u(0)=u_0$, the Riemann-Liouville  time fractional  derivative $\partial_t^\alpha \left(u(t)-u_0\right)$ in the model \eqref{2.1} is identical with the usual Caputo time fractional derivative.
\subsection{BDF$6$ SAV schemes for general time-fractional dissipative systems}
Let $N\in \N,$ $\tau:=T/N$ be the time step, and $t_n :=n \tau,
n=0,\dotsc ,N,$ be a uniform partition of the interval $[0,T].$ We first introduce Lubich's convolution quadrature \cite{Lubich:86},
i.e., the Riemann-Liouville fractional derivative $\partial_t^{\alpha}\varphi(t_n)$ can be approximated by
\begin{equation*}
 \bar{\partial}_{\tau}^{\alpha}\varphi^n:=\frac{1}{{\tau}^{\alpha}}\sum_{j=0}^{n}g_j\varphi^{n-j},
\end{equation*}
with $\varphi^{n}=\varphi(t_n)$, where the the coefficients $\{g_j\}_{j=0}^{\infty}$ are determined by the (BDF$6$ method) generating power series  $g(\zeta)$,
\begin{equation*}
g(\zeta)=\left(\sum_{j=1}^{6}\frac{1}{j}(1-\zeta)^{j}\right)^\alpha=\sum_{j=0}^{\infty}g_j{\zeta}^{j}.
\end{equation*}

We introduce the following BDF$6$ SAV schemes inspired by the six-step simple multiplier in \cite{ACYZ:20} and the SAV approach introduced in \cite{Huang:20,Huangg:20}. The key for the SAV approach is to introduce a scalar auxiliary variable (SAV).  Setting $r(t)=E(u)(t) := \tilde{E}(u)(t)+C_0>0$, we rewrite the energy law \eqref{2.2} as the following expanded system
\begin{equation*}
\frac{dE(u)}{dt}=-\frac{r(t)}{E(u)(t)}\mathcal{K}(u).
\end{equation*}
We construct the BDF$6$ SAV schemes based on the implicit-explicit BDF$6$ formulae in the following unified form:

Given $u^0=\bar{u}^0$, $r^0=E(u^0)$, we compute $\bar{u}^n,r^n,\xi^n$ and $u^n$ consecutively by
\begin{equation} \label{2.5a}
\tau^{-\alpha}\left(g_0\bar{u}^n+\sum_{j=1}^{n}g_ju^{n-j}\right)+\mathcal{A}\bar{u}^n+f\left[B_6(\bar{u}^{n-1})\right]=\tau^{-\alpha}\sum_{j=0}^{n}g_ju^0,
\end{equation}
\begin{equation} \label{2.5b}
\frac{1}{\tau}\left(r^n-r^{n-1}\right)=-\frac{r^n}{E(\bar{u}^n)}\mathcal{K}(\bar{u}^n),
\end{equation}
\begin{equation} \label{2.5c}
\xi^n=\frac{r^n}{E(\bar{u}^n)},
\end{equation}
\begin{equation} \label{2.5d}
u^n=\eta^n\bar{u}^n \quad\eta^n=1-\left(1-\xi^n\right)^{8},
\end{equation}
where $B_6(\bar{u}^{n-1})=6\bar{u}^{n-1}-15\bar{u}^{n-2}+20\bar{u}^{n-3}-15\bar{u}^{n-4}+6\bar{u}^{n-5}-\bar{u}^{n-6}$.

\subsection{BDF$6$ SAV schemes for time-fractional Allen-Cahn  dissipative systems}
Let us  consider the following time-fractional Allen-Cahn  equation \cite{Tang:19},
\begin{equation} \label{3.1}
\partial_t^{\alpha}\left(u-u_0\right)-\Delta u+f(u)=0,
\end{equation}
which  is a special case of \eqref{2.1} with $\mathcal{A}=-\Delta$, and satisfies the dissipation law \eqref{2.2}, with the initial condition $u(0)=u_0$ and the homogeneous Dirichlet boundary condition.
An important feature of the Allen-Cahn equation is that
it can be viewed as the gradient flow in $L^2$ of the Lyapunov energy
functional $E(u)=\frac{1}{2}(\mathcal{L}u,u)+(F(u),1)$, where $(\mathcal{L}u,u)=(\nabla u,\nabla u)$, the Ginzburg-Landau double-well potential $F(u)=\frac{1}{4}(u^2-1)^2$ and $f(u)=F'(u)=u^3-u$.
Without loss of generality, we shall assume that the potential function $G(u)$ satisfies the following condition:
there exists a finite constant $L$ such that
\begin{equation} \label{3.3}
\int_\Omega F(v)dx\geqslant \b{C}>0,~~\forall v,\quad \max_{u\in\mathbb{R}}\left|f'(u)\right|\leqslant L.
\end{equation}
We recursively define a sequence of approximations $u^n$ to the nodal values $u(t_n)$ by the BDF$6$. Correspondingly, the standard implicit-explicit BDF$6$ scheme for solving \eqref{3.1} seeks approximations $u^n, n=1,...,N$ to the analytic  solution $u(t_n)$  by  \cite{Lubich:86}
\begin{equation*}
\bar{\partial}_{\tau}^{\alpha}(u^n-u^0)-\Delta u^n+f\left[B_6(\bar{u}^{n-1})\right]=0,\quad u^0=u_0.
\end{equation*}
Taking $w^n:=u^n-u^0$ with $w^0=0$, we can rewrite the above equation as
\begin{equation*}
\bar{\partial}_{\tau}^{\alpha}w^n-\Delta w^n+f\left[B_6(\bar{u}^{n-1})\right]=\Delta u^0.
\end{equation*}
Correspondingly, taking $w(t):=u(t)-u_0$ with $w(t)=0$, we can rewrite \eqref{3.1} as
\begin{equation}\label{3.4}
\partial_t^{\alpha}w-\Delta w+f(u)=\Delta u_0,~~0<t<T.
\end{equation}
For \eqref{3.4}, the BDF$6$ SAV version of \eqref{2.5a} reads:
\begin{equation} \label{3.5}
\tau^{-\alpha}\left(g_0\bar{w}^n+\sum_{j=1}^n g_jw^{n-j}\right)-\Delta\bar{w}^n+f\left[B_6(\bar{u}^{n-1})\right]=\Delta u_0.
\end{equation}

\subsection{A stability result}
The following stability results of the above BDF$6$ SAV schemes are valid for general time-fractional dissipative systems.
\begin{theorem} \label{theorem1}	
Given $r^{n-1}\geqslant0$, we have $r^n\geqslant0,\xi^n\geqslant0$, and the scheme \eqref{2.5a}, \eqref{2.5b}, \eqref{2.5c}, \eqref{2.5d} for BDF$6$ is unconditionally energy stable in the sense that
\begin{equation} \label{2.12}
r^n-r^{n-1}=-\tau\xi^n\mathcal{K}(\bar{u}^n)\leqslant0.
\end{equation}
Furthermore, if $E(u)=\frac{1}{2}\left(\mathcal{L}u,u\right)+E_1(u)$ with $\mathcal{L}$ positive and $E_1(u)$ bounded from below, there exists
$M>0$ such that
\begin{equation} \label{2.13}
(\mathcal{L}u^n,u^n)\leqslant M^2,\forall n.
\end{equation}
\end{theorem}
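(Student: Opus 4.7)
\noindent\textit{Proof proposal.} The plan is to read \eqref{2.12} off the scalar recursion for $r^n$ and then deduce \eqref{2.13} from $r^n\leqslant r^0$ together with the factorisation of $\eta^n$ in \eqref{2.5d}. First I rearrange \eqref{2.5b} into
$$r^n\left[1+\tau\,\frac{\mathcal{K}(\bar u^n)}{E(\bar u^n)}\right]=r^{n-1},\qquad \text{i.e.}\quad r^n=\frac{r^{n-1}}{1+\tau\,\mathcal{K}(\bar u^n)/E(\bar u^n)}.$$
Because $E(\bar u^n)>0$ (from $\tilde E>-C_0$) and $\mathcal{K}(\bar u^n)\geqslant 0$ by the dissipation hypothesis, this shows $r^n\geqslant 0$ whenever $r^{n-1}\geqslant 0$, so $\xi^n=r^n/E(\bar u^n)\geqslant 0$ by \eqref{2.5c}. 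Substituting $\xi^n$ back into \eqref{2.5b} turns the right-hand side into $-\tau\xi^n\mathcal{K}(\bar u^n)\leqslant 0$, which is \eqref{2.12}.

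Iterating \eqref{2.12} yields the monotone bound $0\leqslant r^n\leqslant r^0$. For \eqref{2.13} I would exploit $E(u)=\tfrac12(\mathcal{L}u,u)+E_1(u)$ together with $E_1\geqslant -C_1$ to get $\tfrac12(\mathcal{L}\bar u^n,\bar u^n)\leqslant E(\bar u^n)+C_1$, and then combine with \eqref{2.5d}:
$$(\mathcal{L}u^n,u^n)=(\eta^n)^2(\mathcal{L}\bar u^n,\bar u^n)\leqslant 2(\eta^n)^2 E(\bar u^n)+2(\eta^n)^2 C_1.$$
The crucial algebraic identity is $1-(1-\xi^n)^8=\xi^n\,p(\xi^n)$ with $p(\xi^n):=\sum_{k=0}^{7}(1-\xi^n)^k$, so that $\eta^n=\xi^n p(\xi^n)$ and $(\eta^n)^2 E(\bar u^n)=\xi^n p(\xi^n)^2\,r^n\leqslant r^0\,\xi^n p(\xi^n)^2$. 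Both right-hand quantities then reduce to explicit polynomials in $\xi^n$ alone.

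The step I expect to be the main obstacle is a uniform-in-$n$ bound on $\xi^n$, since $\xi^n=r^n/E(\bar u^n)$ can be large only when $E(\bar u^n)$ is small. I would therefore extract a positive lower bound $E(\bar u^n)\geqslant \underline C>0$ from the quantitative positivity of the shifted energy (available in the Allen-Cahn setting through \eqref{3.3}); once $\xi^n\leqslant r^0/\underline C$, the polynomials $\xi^n p(\xi^n)^2$ and $(\eta^n)^2=(\xi^n)^2 p(\xi^n)^2$ admit explicit constant upper bounds, and \eqref{2.13} follows with $M$ depending only on $r^0$, $\underline C$ and $C_1$. Without such a strictly positive infimum for $E$ the estimate would genuinely break down, since $(\eta^n)^2\sim(\xi^n)^{16}$ as $\xi^n\to\infty$; this is the delicate point where the specific structure of the energy, rather than mere boundedness from below of $E_1$, really enters.
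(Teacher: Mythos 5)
Your proposal is correct, and its first half (the rearrangement of \eqref{2.5b} into $r^n=r^{n-1}/\bigl(1+\tau\mathcal{K}(\bar u^n)/E(\bar u^n)\bigr)$, hence $r^n,\xi^n\geqslant 0$ and \eqref{2.12}) is exactly the paper's argument. For \eqref{2.13} you take a mildly but genuinely different route. The paper normalizes ``without loss of generality $E_1(u)>1$'' and then shows that $\eta^n$ itself decays, namely $|\xi^n|\leqslant 2r^0/\bigl((\mathcal{L}\bar u^n,\bar u^n)+2\bigr)$ together with the factorization $\eta^n=\xi^nP_7(\xi^n)$ gives $|\eta^n|\leqslant M/\bigl((\mathcal{L}\bar u^n,\bar u^n)+2\bigr)$, so that $(\eta^n)^2(\mathcal{L}\bar u^n,\bar u^n)\leqslant M^2$ follows from $x/(x+2)^2\leqslant 1$. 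You instead eliminate $(\mathcal{L}\bar u^n,\bar u^n)$ via $(\mathcal{L}\bar u^n,\bar u^n)\leqslant 2E(\bar u^n)+2C_1$ and exploit the identity $\xi^nE(\bar u^n)=r^n\leqslant r^0$, reducing everything to polynomials in $\xi^n$ on a compact interval. Both arguments rest on the same three ingredients --- the monotone bound $r^n\leqslant r^0$, the factorization $\eta^n=\xi^n\sum_{k=0}^{7}(1-\xi^n)^k$, and a uniform positive lower bound on $E$ --- so the content is the same; your version has the merit of making explicit exactly where the strict positivity $\inf E>0$ enters, which the paper buries in its ``WLOG $E_1>1$'' step. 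One small remark: you do not need the Allen--Cahn-specific condition \eqref{3.3} to secure that lower bound; since $E_1$ is assumed bounded from below, the normalization $E_1>1$ (i.e.\ enlarging the constant $C_0$ in $E=\tilde E+C_0$) already provides $E(\bar u^n)\geqslant 1$, which is precisely the $\underline C$ you ask for, so the theorem stands in the stated generality.
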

\begin{proof}
For bringing convenience to the reader, we reprove this theorem; the proof is almost the same as that of \cite{Huangg:20}.
Given $r^{n-1}\geqslant0$ and since $E(\bar{u}^n)>0$, it follows from \eqref{2.5b} that
\begin{equation*}
r^n=\frac{r^{n-1}}{1+\tau\frac{\mathcal{K}(\bar{u}^n)}{E(\bar{u}^n)}}\geqslant0.
\end{equation*}
Then we derive from \eqref{2.5c} that $\xi^n\geqslant0$ and obtain \eqref{2.12}.
Thus, \eqref{2.12} implies $r^n\leqslant r^0,\forall n$.

Without loss of generality, we can assume $E_1(u)>1$ for all $u$. It follows from \eqref{2.5c} that
\begin{equation} \label{2.14}
|\xi^n|=\frac{r^n}{E(\bar{u}^n)}\leqslant\frac{2r^0}{(\mathcal{L}\bar{u}^n,\bar{u}^n)+2}.
\end{equation}
From  $\eta^n=1-(1-\xi^n)^{8}$ in \eqref{2.5d}, we have $\eta^n=\xi^nP_7(\xi^n)$ with $P_7$ being a polynomial of degree $7$.
Then, we derive from \eqref{2.14} that there exists $M>0$ such that
\begin{equation*}
|\eta^n|=|\xi^nP_{7}(\xi^n)|\leqslant\frac{M}{(\mathcal{L}\bar{u}^n,\bar{u}^n)+2}.
\end{equation*}
According to $u^n=\eta^n\bar{u}^n$ in \eqref{2.5d}, it implies
\begin{equation*}
\left(\mathcal{L}u^n,u^n\right)=(\eta^n)^2\left(\mathcal{L}\bar{u}^n,\bar{u}^n\right)\leqslant\left(\frac{M}{(\mathcal{L}\bar{u}^n,\bar{u}^n)+2}\right)^2(\mathcal{L}\bar{u}^n,\bar{u}^n)\leqslant M^2.
\end{equation*}
The proof is completed.
\end{proof}
\section{A few technical lemmas}
Before we proceed, for the reader's convenience, we recall the notion of the generating function of
an $n\times n$ Toeplitz  matrix $T_n$ as well as an auxiliary result, the Grenander--Szeg\"o theorem.

\begin{definition}\cite[p.\,27]{Quarteroni:07}\label{definition2.7}
A matrix $A \in \Re^{n\times n}$ is said to be positive definite in $\Re^{n}$ if $(Ax,x)>0$, $\forall x \in \Re^{n}$, $x\neq 0$.
\end{definition}

\begin{lemma}\cite[p.\,28]{Quarteroni:07}\label{lemma2.4}
A real matrix $A$ of order $n$ is positive definite  if and only if  its symmetric part $H=\frac{A+A^T}{2}$ is positive definite.
Let $H \in \Re^{n\times n}$ be symmetric. Then $H$ is positive definite if and only if the eigenvalues of $H$ are positive.
\end{lemma}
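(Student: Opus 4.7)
The plan is to treat the two equivalences in the lemma separately, since they are logically independent. For the first, I would use the canonical decomposition $A = H + K$ with $H = (A+A^T)/2$ symmetric and $K = (A-A^T)/2$ antisymmetric. The key observation is that for any antisymmetric $K$ and any $x \in \Re^n$, the scalar $x^T K x$ equals its own transpose and therefore satisfies $x^T K x = -x^T K x$, which forces $x^T K x = 0$. Hence $(Ax,x) = (Hx,x)$ for every $x \in \Re^n$, and so $(Ax,x) > 0$ for all nonzero $x$ if and only if $(Hx,x) > 0$ for all nonzero $x$; this is exactly the first equivalence in the statement.

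For the second equivalence I would invoke the spectral theorem for real symmetric matrices: there exist an orthogonal matrix $Q$ and a diagonal matrix $\Lambda=\mathrm{diag}(\lambda_1,\ldots,\lambda_n)$, whose entries are the eigenvalues of $H$, such that $H = Q\Lambda Q^T$. Setting $y := Q^T x$ and using orthogonality of $Q$ yields
\begin{equation*}
(Hx,x) = x^T Q \Lambda Q^T x = y^T \Lambda y = \sum_{i=1}^n \lambda_i y_i^2,
\end{equation*}
and since $Q$ is an isometry we have $x\neq 0$ if and only if $y\neq 0$. The reverse direction is then immediate: if every $\lambda_i>0$, each term in the sum is nonnegative and the total is strictly positive whenever $y\neq 0$. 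For the forward direction I would take $x := Q e_i$, where $e_i$ is the $i$-th standard basis vector, so that $(Hx,x) = \lambda_i > 0$ for each $i$.

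Since the lemma is standard and cited from Quarteroni's textbook, I do not anticipate a conceptual obstacle. The only points that require a minimum of care are making sure the antisymmetric-part identity is used correctly (it does not require $A$ itself to be symmetric, which is crucial for the first equivalence to be nontrivial), and invoking the spectral theorem in its real-orthogonal form, which is exactly what the symmetry hypothesis on $H$ permits. Both parts together yield the statement as formulated.
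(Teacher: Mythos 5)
Your proof is correct. The paper does not prove this lemma at all --- it is quoted verbatim from Quarteroni's textbook and used as a black box in the proof of Lemma \ref{lemma3} --- so there is no in-paper argument to compare against; your symmetric/antisymmetric splitting for the first equivalence and the spectral theorem for the second are exactly the standard textbook route, and both steps are carried out correctly.
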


\begin{definition}\cite[p.\,13]{Chan:07} (the generating function of a Toeplitz matrix) \label{De:gen-funct}
Consider the $n \times n$ Toeplitz  matrix  $T_n=(t_{ij})\in \Re^{n,n}$ with diagonal entries $t_0,$ subdiagonal entries
$t_1,$ superdiagonal entries $t_{-1},$ and so on, and $(n,1)$ and $(1,n)$ entries
$t_{n-1}$ and   $t_{1-n}$, respectively, i.e., the entries $t_{ij}=t_{i-j}, i,j=1,\dotsc,n,$ are constant along the diagonals of $T_n.$
 Let   $t_{-n+1},\dotsc, t_{n-1}$ be the Fourier coefficients of the trigonometric polynomial $h(x)$, i.e.,
\begin{equation*}
  t_k=\frac{1}{2\pi}\int_{-\pi}^{\pi}h(x)e^{-i kx} \mathrm{d} x,\quad k=1-n,\dotsc,n-1.
\end{equation*}
Then, $h(x)=\sum_{k=1-n}^{n-1} t_ke^{i kx},$ is called  \emph{generating function} of $T_n$.
\end{definition}

\begin{lemma}\cite[p.\,13--15]{Chan:07} (the Grenander-Szeg\"{o} theorem)\label{lemma2.6}
Let $T_n$ be given in Definition \ref{De:gen-funct} with a generating function $h(x)$.
Then, the smallest and largest eigenvalues  $\lambda_{\min}(T_n)$ and $\lambda_{\max}(T_n)$, respectively, of $T_n$ are bounded as follows
\begin{equation*}
  h_{\min} \leqslant \lambda_{\min}(T_n) \leqslant \lambda_{\max}(T_n) \leqslant h_{\max},
\end{equation*}
with $h_{\min}$ and  $h_{\max}$  the minimum and maximum of $h(x)$, respectively.
In particular, if  $h_{\min}$ is positive, then $T_n$ is positive definite.
\end{lemma}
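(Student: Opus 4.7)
The plan is to establish the eigenvalue bounds via the Rayleigh quotient characterization, by rewriting the quadratic form $v^* T_n v$ as a weighted $L^2$ integral of the generating function against a nonnegative density built from $v$, and then using Parseval.

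First, I would associate to each nonzero $v=(v_1,\ldots ,v_n)^\top\in\mathbb{C}^n$ the trigonometric polynomial $p_v(x)=\sum_{j=1}^n v_j e^{\mathrm{i}jx}$. Substituting the Fourier representation $t_{i-j}=\frac{1}{2\pi}\int_{-\pi}^{\pi}h(x)e^{-\mathrm{i}(i-j)x}dx$ from Definition~\ref{De:gen-funct} into the quadratic form and swapping the finite sum with the integral yields
\begin{equation*}
v^* T_n v=\sum_{i,j=1}^{n}\bar v_i\,t_{i-j}\,v_j=\frac{1}{2\pi}\int_{-\pi}^{\pi}h(x)\,|p_v(x)|^{2}\,dx,
\end{equation*}
while Parseval's identity gives $v^*v=\frac{1}{2\pi}\int_{-\pi}^{\pi}|p_v(x)|^{2}\,dx$. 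Since $|p_v(x)|^{2}\geqslant 0$ and $h_{\min}\leqslant h(x)\leqslant h_{\max}$, dividing these two identities sandwiches the Rayleigh quotient,
\begin{equation*}
h_{\min}\leqslant \frac{v^* T_n v}{v^* v}\leqslant h_{\max}\qquad\text{for all } v\neq 0.
\end{equation*}

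Second, I would invoke the Courant--Fischer min--max principle for the Hermitian part of $T_n$, combined with Lemma~\ref{lemma2.4}, to translate the Rayleigh quotient bound into an eigenvalue bound for $T_n$ itself, producing $h_{\min}\leqslant \lambda_{\min}(T_n)\leqslant \lambda_{\max}(T_n)\leqslant h_{\max}$. The positive definiteness statement is then immediate: if $h_{\min}>0$, the Rayleigh quotient is strictly positive on every nonzero $v$, so by Definition~\ref{definition2.7} the matrix $T_n$ is positive definite.

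The main obstacle is not technical but a consistency issue that must be flagged: for the two-sided eigenvalue bound to make sense one needs $h$ to be real-valued (equivalently $t_{-k}=\overline{t_k}$, so that $T_n$ is Hermitian and its spectrum lies on $\Re$). Under that (implicitly assumed) condition the proof is essentially a two-line Fourier/Parseval computation, and no further combinatorial or analytic difficulty arises; the entire argument is an application of the symbol-quadratic form correspondence rather than a genuinely novel estimate.
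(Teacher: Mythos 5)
Your proposal is correct: the identity $v^*T_nv=\frac{1}{2\pi}\int_{-\pi}^{\pi}h(x)|p_v(x)|^2\,dx$ together with Parseval and the Rayleigh-quotient characterization is precisely the standard proof of the Grenander--Szeg\"{o} theorem, and it is the argument given in the cited reference \cite[p.\,13--15]{Chan:07}; the paper itself states the lemma without proof. Your caveat that $h$ must be real-valued (equivalently $t_{-k}=\overline{t_k}$, so that $T_n$ is Hermitian and Courant--Fischer applies to $T_n$ directly rather than to its Hermitian part) is well taken and is satisfied in the paper's only application, Lemma~\ref{lemma3}, where the theorem is invoked for the symmetric part $(L_2+L_2^T)/2$, whose generating function is a real cosine polynomial.
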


\begin{lemma}\cite{Xu:11}\label{lemma2.7}
Let $\{ q_j\}_{j=0}^\infty$ be a sequence of real numbers such that $q(\zeta)=\sum_{j=0}^\infty q_j\zeta^j$ is analytic in the unit disk $S=\{\zeta \in \mathbb{C}: |\zeta|\leqslant 1\}$.
Then for any positive integer $m$ and for any $\left(v^1,\ldots,v^m\right)$
\begin{equation*}
\begin{split}
\sum_{n=1}^m\left(\sum_{j=0}^{n-1} q_j v^{n-j},v^n\right)\geqslant 0,
\end{split}
\end{equation*}
if and only if
$\Real q(\zeta)\geqslant 0$, if and only if $\arg \left[q(\zeta)\right]\in\left[-\frac{\pi}{2},\frac{\pi}{2}\right]$.
\end{lemma}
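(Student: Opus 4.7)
The plan is to recast the quadratic form as a Plancherel integral over the unit circle and then invoke the minimum principle for harmonic functions.

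First, by decomposing each $v^n$ against an orthonormal basis of the underlying real Hilbert space, bilinearity reduces the claim to the scalar case, which I denote by $w^n\in\mathbb{R}$. Associate to $(w^1,\ldots,w^m)$ the polynomial $W(\zeta)=\sum_{n=1}^m w^n\zeta^n$. Expanding the product $q(e^{ix})W(e^{ix})\overline{W(e^{ix})}$ and using orthogonality of $\{e^{ikx}\}_{k\in\mathbb{Z}}$ yields the identity
\begin{equation*}
S_m:=\sum_{n=1}^m\sum_{j=0}^{n-1}q_j w^{n-j}w^n=\frac{1}{2\pi}\int_{-\pi}^{\pi}q(e^{ix})\,|W(e^{ix})|^2\,dx.
\end{equation*}
Since the $q_j$ and $w^n$ are real, $|W(e^{ix})|^2$ is even in $x$ while $\mathrm{Im}\,q(e^{ix})=\sum_{j\ge1}q_j\sin(jx)$ is odd, so the imaginary part of the integral vanishes, leaving
\begin{equation*}
S_m=\frac{1}{2\pi}\int_{-\pi}^{\pi}\Real q(e^{ix})\,|W(e^{ix})|^2\,dx.
\end{equation*}

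Sufficiency is then immediate: if $\Real q(\zeta)\ge0$ on $\{|\zeta|\le1\}$, then $\Real q(e^{ix})\ge0$ on the circle, the integrand is non-negative, and $S_m\ge0$ for every $m$ and every $(w^n)$. For necessity, assume $\Real q(e^{ix_0})<0$ at some $x_0$. Choosing $w^n$ from a Fej\'er-type coefficient sequence centred at $\pm x_0$, the function $|W(e^{ix})|^2$ becomes an approximate identity localised near these two points; by continuity together with the evenness of $\Real q(e^{ix})$, the integral becomes strictly negative for $m$ sufficiently large, contradicting $S_m\ge0$. Hence $\Real q(e^{ix})\ge0$ on the unit circle, and because $\Real q$ is harmonic in the open disk and continuous up to the boundary (by analyticity of $q$ on $\{|\zeta|\le1\}$), the minimum principle propagates non-negativity to the entire closed disk. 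The final equivalence is a tautology: for $z\in\mathbb{C}$, $\Real z\ge0$ iff $\arg z\in[-\pi/2,\pi/2]$.

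The main obstacle is the necessity direction: the Fej\'er construction must be calibrated so that $|W|^2$ genuinely concentrates and its oscillatory tails do not erase the sign information near $x_0$. An alternative route that stays within the framework of this paper is to invoke Lemma \ref{lemma2.4} to replace the lower-triangular form by its symmetric part $\widetilde Q_m$ (a symmetric Toeplitz matrix with $\widetilde Q_{n,k}=q_{|n-k|}/2$ off-diagonal and $q_0$ on the diagonal), and then apply Lemma \ref{lemma2.6} (Grenander--Szeg\"o) together with the classical asymptotic sharpness $\lambda_{\min}(\widetilde Q_m)\to\min_x\Real q(e^{ix})$ as $m\to\infty$ to conclude.
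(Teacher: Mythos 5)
The paper does not actually prove this lemma; it is imported verbatim from \cite{Xu:11}, so there is no internal argument to compare against and you are supplying a proof the authors omit. Your main line is the standard one and it is correct: writing $W(\zeta)=\sum_{n=1}^m w^n\zeta^n$ and using $\sum_j|q_j|<\infty$ (analyticity on the closed disk) to integrate term by term, the convolution structure gives exactly
\begin{equation*}
\sum_{n=1}^m\sum_{j=0}^{n-1}q_jw^{n-j}w^n=\frac{1}{2\pi}\int_{-\pi}^{\pi}\Real q(e^{ix})\,\bigl|W(e^{ix})\bigr|^2\,dx,
\end{equation*}
and the reduction from $L^2(\Omega)$-valued $v^n$ to scalars via an orthonormal basis is legitimate in both directions. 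Sufficiency and the two closing equivalences (minimum principle for the harmonic function $\Real q$ to pass from the circle to the closed disk; $\Real z\geqslant0\Leftrightarrow\arg z\in[-\pi/2,\pi/2]$) are fine. Two caveats. First, the necessity step is only sketched: with real coefficients $w^n=\cos(nx_0)$, $|W|^2$ splits into two squared Dirichlet kernels of total mass of order $N$ concentrating at $\pm x_0$, plus a cross term; one should check that the cross term, paired against $\Real q$, stays $O(1)$ as $N\to\infty$ because $\sum_j|q_j|<\infty$ and the relevant geometric sums in $x_0$ are uniformly bounded for $x_0\neq0,\pi$ (those two endpoint cases being handled directly by $w^n\equiv1$ or $w^n=(-1)^n$). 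This is exactly the calibration you flag; it does go through, but as written it is an assertion rather than a proof. Second, your proposed alternative via Lemmas \ref{lemma2.4} and \ref{lemma2.6} does not quite deliver sufficiency: the generating function of the $m\times m$ symmetric part is the truncation $q_0+\sum_{j=1}^{m-1}q_j\cos(jx)$, not $\Real q(e^{ix})$ itself, so Grenander--Szeg\"o only yields $\lambda_{\min}\geqslant\min_x\Real q(e^{ix})-\varepsilon_m$ with $\varepsilon_m\to0$, which is weaker than the exact nonnegativity claimed for every finite $m$. Keep the integral identity as the actual proof.
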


There are already a class of new   multipliers  for the six-step BDF method  of the  time-dependent PDEs   \cite{ACYZ:20,CYZ:20}.
For example, the multiplier
$$\mu_1=13/9, \mu_2=-25/36, \mu_3=1/9, \mu_4=\mu_5=\mu_6=0$$ was constructed  in \cite{ACYZ:20} for  the  parabolic equation.
Here, based on the idea of  \cite{ACYZ:20,CYZ:20},  we develop the above  multiplier to the time fractional problem.

Taking $v^n=w^n-\mu_1w^{n-1}-\mu_2w^{n-2}-\mu_3w^{n-3}$, there exists
\begin{equation}\label{3.6}
\begin{split}
\sum_{j=0}^n g_j w^{n-j}
&=g_0 w^{n}+g_1 w^{n-1}+g_2 w^{n-2}+\cdots+g_{n-1} w^{1}\\
&=g_0\left(w^n-\mu_1w^{n-1}-\mu_2w^{n-2}-\mu_3w^{n-3}\right)\\
&\quad+\left(g_1+\frac{13}{9}g_0\right)\left(w^{n-1}-\mu_1w^{n-2}-\mu_2w^{n-3}-\mu_3w^{n-4}\right)\!+\!\cdots\\
&\quad+\left(g_{n-1}+\frac{13}{9}g_{n-2}+\cdots
+\frac{9^n(n-8)+8^{n+1}}{18^{n-1}}g_{0}\right)\\
&\quad\times\left(w^{1}-\mu_1w^{0}-\mu_2w^{-1}-\mu_3w^{-2}\right)
=\sum_{j=0}^{n-1} q_j v^{n-j}
\end{split}
\end{equation}
with the  starting values $w^{0}=w^{-1}=w^{-2}=0$ and
\begin{equation}\label{2.8}
\begin{split}
q_j=\sum_{l=0}^j\frac{9^{l+1}(l-7)+8^{l+2}}{18^{l}}g_{j-l}.
\end{split}
\end{equation}

\begin{lemma}\label{lemma4}
Let $q_j$ be defined by \eqref{2.8}. Then for any positive integer $m$, the following nonnegativity property holds
\begin{equation*}
\begin{split}
\sum_{n=1}^m\left(\sum_{j=0}^{n-1} q_j v^{n-j},v^n\right)\geqslant 0.
\end{split}
\end{equation*}
\end{lemma}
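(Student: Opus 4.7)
The strategy is to invoke Lemma~\ref{lemma2.7}: it suffices to show that the generating function $q(\zeta)=\sum_{j=0}^{\infty}q_{j}\zeta^{j}$ of the sequence in \eqref{2.8} satisfies $\Real\, q(\zeta)\geqslant 0$ on the closed unit disk. The starting observation is that \eqref{2.8} is the Cauchy product of $\{g_{j}\}$ with $\lambda_{l}:=[9^{l+1}(l-7)+8^{l+2}]/18^{l}$, so $q(\zeta)=\lambda(\zeta)\,g(\zeta)$, where $\lambda(\zeta):=\sum_{l=0}^{\infty}\lambda_{l}\zeta^{l}$ and $g(\zeta)=\bigl(\sum_{j=1}^{6}(1-\zeta)^{j}/j\bigr)^{\alpha}$.

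Next I would derive a closed form for $\lambda(\zeta)$. Splitting $\lambda_{l}=9(l-7)/2^{l}+64\,(4/9)^{l}$ and summing the resulting arithmetico-geometric and geometric series yields
\begin{equation*}
\lambda(\zeta) \;=\; \frac{36(4\zeta-7)}{(2-\zeta)^{2}}+\frac{576}{9-4\zeta} \;=\; \frac{36}{(2-\zeta)^{2}(9-4\zeta)}.
\end{equation*}
A sanity check confirms the formula: expanding $(9-4\zeta)(2-\zeta)^{2}=36-52\zeta+25\zeta^{2}-4\zeta^{3}$ recovers exactly $36\bigl(1-\mu_{1}\zeta-\mu_{2}\zeta^{2}-\mu_{3}\zeta^{3}\bigr)$ for the multiplier $(\mu_{1},\mu_{2},\mu_{3})=(13/9,-25/36,1/9)$ used to pass from \eqref{3.6} to \eqref{2.8}. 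Since the poles of $\lambda$ lie at $\zeta=2$ and $\zeta=9/4$, both strictly outside $\{|\zeta|\leqslant 1\}$, the function $q(\zeta)=36\,g(\zeta)/[(2-\zeta)^{2}(9-4\zeta)]$ is holomorphic on a neighbourhood of the closed unit disk; by the minimum principle applied to the harmonic function $\Real\, q$, it then suffices to verify the inequality on the boundary $|\zeta|=1$.

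On the unit circle the condition $\Real\, q(e^{i\theta})\geqslant 0$ is equivalent to $|\arg q(e^{i\theta})|\leqslant \pi/2$, and I would bound each summand of
\begin{equation*}
\arg q(e^{i\theta}) \;=\; \alpha\,\arg\delta(e^{i\theta}) \;-\; 2\arg(2-e^{i\theta}) \;-\; \arg(9-4e^{i\theta}),
\end{equation*}
where $\delta(\zeta):=\sum_{j=1}^{6}(1-\zeta)^{j}/j$ is the BDF$6$ characteristic function. Elementary geometry of the images of $|\zeta|=1$ under $\zeta\mapsto 2-\zeta$ and $\zeta\mapsto 9-4\zeta$ gives the sharp bounds $|\arg(2-e^{i\theta})|\leqslant \pi/6$ and $|\arg(9-4e^{i\theta})|\leqslant \arcsin(4/9)$, so these two denominator factors contribute at most $\pi/3+\arcsin(4/9)<\pi/2$ in absolute value, already settling the limit $\alpha\to 0^{+}$. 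The principal difficulty is controlling $\alpha\,\arg\delta(e^{i\theta})$ uniformly in $\alpha\in(0,1)$: since BDF$6$ is not A-stable, $|\arg\delta(e^{i\theta})|$ can exceed $\pi/2$ on part of the unit circle, and the necessary cancellation must be extracted sign by sign from the denominator factors. I would handle this by combining the asymptotic expansion $1-e^{i\theta}=-2i\sin(\theta/2)e^{i\theta/2}$ near $\theta=0$ (which forces $\arg\delta(e^{i\theta})\to\pm\pi/2$ while $\arg(2-e^{i\theta})$ and $\arg(9-4e^{i\theta})$ vanish, leaving the benign quantity $\pm\alpha\pi/2$) with a careful estimate on the remaining compact arc along the lines of \cite{ACYZ:20,CYZ:20}, treating the extreme case $\alpha\to 1^{-}$ as the most restrictive.
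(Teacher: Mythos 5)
Your proposal follows essentially the same route as the paper's proof: reduce via Lemma~\ref{lemma2.7} to $\Real\, q\geqslant 0$, identify $q(\zeta)=g(\zeta)/\bigl[(1-\tfrac{1}{2}\zeta)^2(1-\tfrac{4}{9}\zeta)\bigr]$, and split $\arg q$ into the $\alpha$-weighted BDF$6$ angle plus the two denominator angles, with the upper bound settled by $\bigl|2\arg(2-e^{i\theta})\bigr|+\bigl|\arg(9-4e^{i\theta})\bigr|\leqslant \pi/3+\arcsin(4/9)<\pi/2$ (the paper obtains the same numerical bound $<1.51$ by a calculus argument instead of your per-factor geometric bound). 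The only refinement to note is that the ``careful estimate on the remaining compact arc'' you defer is unnecessary: since $\arg\delta(e^{i\theta})\leqslant 0$ on the upper semicircle (and $\geqslant 0$ on the lower), one has $\alpha\arg\delta\geqslant\arg\delta$, so the lower bound reduces verbatim to the $\alpha=1$ real-part inequality \eqref{ad3.3}, which is exactly Proposition 2.1 of \cite{ACYZ:20} — precisely how the paper closes the argument.
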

\begin{proof}
From \eqref{2.8}, we get
\begin{equation*}
\begin{split}
q(z)=\sum_{j=0}^\infty q_jz^j=&\frac{\left(\frac{147}{60}-6z+\frac{15}{2}z^2-\frac{20}{3}z^3+\frac{15}{4}z^4-\frac{6}{5}z^5+\frac{1}{6}z^6\right)^{\alpha}}{\left(1-\frac{1}{2}z\right)^2\left(1-\frac{4}{9}z\right)}\\
&=\frac{\left(1-z\right)^{\alpha}\left(\frac{147}{60}-\frac{213}{60}z+\frac{237}{60}z^2-\frac{163}{60}z^3+\frac{62}{60}z^4-\frac{10}{60}z^5\right)^{\alpha}}
{\left(1-\frac{1}{2}z\right)^2\left(1-\frac{4}{9}z\right)}.
\end{split}
\end{equation*}
Next we apply  the  Grenander-Szeg\"{o} theorem  to obtain the desired result.
Let  $z=e^{ix}$ with $x\in [0,\pi]$, we have
\begin{equation*}
\begin{split}
\left(1-z\right)^{\alpha}=\left(2\sin\frac{x}{2}\right)^{\alpha}e^{i{\alpha}\theta_1}
\end{split}
\end{equation*}
with $\theta_1=\arctan\frac{-\sin (x)}{1-\cos x}=\frac{x-\pi}{2}\leqslant 0$; and
\begin{equation*}
\begin{split}
\left(\frac{147}{60}-\frac{213}{60}z+\frac{237}{60}z^2-\frac{163}{60}z^3+\frac{62}{60}z^4-\frac{10}{60}z^5\right)^{\alpha}
=\left(a_6-ib_6\right)^{\alpha}=\left(a_6^2+b_6^2\right)^{\frac{\alpha}{2}}e^{i{\alpha}\theta_2}
\end{split}
\end{equation*}
with
\begin{equation*}
\begin{split}
a_6(x)&=\frac{1}{60}\left(147-213\cos (x)+237\cos(2x)-163\cos(3x)+62\cos(4x)-10\cos(5x)\right),\\
b_6(x)&=\frac{1}{60}\left(213\sin (x)-237\sin(2x)+163\sin(3x)-62\sin(4x)+10\sin(5x)\right)\geqslant 0,\\
\end{split}
\end{equation*}
and $\theta_2=\arctan\frac{-b_6(x)}{a_6(x)}\leqslant 0,~~a_6(x)\geqslant 0,$ or $\theta_2=\arctan\frac{-b_6(x)}{a_6(x)}-\pi\leqslant 0,~~a_6(x)\leqslant 0.$
Furthermore, there exists
\begin{equation*}
\begin{split}
\frac{1}{\left(1-\frac{1}{2}z\right)^2}=\frac{1}{\frac{5}{4}-\cos (x)}e^{i\theta_3},~~\theta_3=2\arctan\frac{\frac{1}{2}\sin (x)}{1-\frac{1}{2}\cos (x)}\geqslant 0,
\end{split}
\end{equation*}
and
\begin{equation*}
\begin{split}
\frac{1}{1-\frac{4}{9}z}=\left(\frac{97}{81}-\frac{8}{9}\cos (x)\right)^{-\frac{1}{2}}e^{i\theta_4},~~\theta_4=\arctan\frac{\frac{4}{9}\sin (x)}{1-\frac{4}{9}\cos (x)}\geqslant 0.
\end{split}
\end{equation*}
From Lemma \ref{lemma2.7}, we need to prove
\begin{equation*}
\begin{split}
\Real\left\{\frac{\left(\frac{147}{60}-6z+\frac{15}{2}z^2-\frac{20}{3}z^3+\frac{15}{4}z^4-\frac{6}{5}z^5+\frac{1}{6}z^6\right)^{\alpha}}{\left(1-\frac{1}{2}z\right)^2\left(1-\frac{4}{9}z\right)}\right\}\geqslant0,
\end{split}
\end{equation*}
which is equal to prove
\begin{equation*}
\begin{split}
\arg\left\{\frac{\left(\frac{147}{60}-6z+\frac{15}{2}z^2-\frac{20}{3}z^3+\frac{15}{4}z^4-\frac{6}{5}z^5+\frac{1}{6}z^6\right)^{\alpha}}{\left(1-\frac{1}{2}z\right)^2\left(1-\frac{4}{9}z\right)}\right\}\in\left[-\frac{\pi}{2},\frac{\pi}{2}\right].
\end{split}
\end{equation*}
According to the above equations, we have
\begin{equation*}
\begin{split}
&\arg\left\{\frac{\left(\frac{147}{60}-6z+\frac{15}{2}z^2-\frac{20}{3}z^3+\frac{15}{4}z^4-\frac{6}{5}z^5+\frac{1}{6}z^6\right)^{\alpha}}{\left(1-\frac{1}{2}z\right)^2\left(1-\frac{4}{9}z\right)}\right\}\\
&=\arg\left\{\left(1-z\right)^{\alpha}\right\}+\arg\left\{\left(\frac{147}{60}-\frac{213}{60}z+\frac{237}{60}z^2-\frac{163}{60}z^3+\frac{62}{60}z^4-\frac{10}{60}z^5\right)^{\alpha}\right\}\\
&\quad+\arg\left\{\frac{1}{\left(1-\frac{1}{2}z\right)^2}\right\}+\arg\left\{\frac{1}{1-\frac{4}{9}z}\right\}
=\alpha\theta_1+\alpha\theta_2+\theta_3+\theta_4.
\end{split}
\end{equation*}

We shall prove  $\alpha \theta_1+\alpha \theta_2+\theta_3+\theta_4\leqslant \theta_3+\theta_4<\frac{\pi}{2}$. Let $\delta(x)=(\theta_3+\theta_4)(x)$, we have
\begin{equation*}
\begin{split}
\delta'(x)=\frac{2}{\left(97-72y\right)\left(5-4y\right)}p(y) ~~{\rm with}~y=\cos (x).
\end{split}
\end{equation*}
Here $p(y)=-216y^2+388y-137$ with the roots $y_1=\frac{-97-\sqrt{2011}}{-108}>1$ and $y_2=\frac{-97+\sqrt{2011}}{-108}\approx0.48292$ with $x_2\approx1.0668$.
In further, we obtain $p(y)<0$ if $y\in(-1,y_2)$ and $p(y)>0$ if $y\in(y_2,1)$.
Moreover, combining with $\left(97-72y\right)\left(5-4y\right)>0$, it implies  that
 $\delta'(x)<0$ if $x\in(x_2,\pi)$ and $\delta'(x)>0$ if $x\in(0,x_2)$.
Therefore, the function  $\delta$ attains its maximum at $x^\star=x_2$
and
$$\delta(x^\star)=2\arctan\frac{\frac{1}{2}\sin (x_2)}{1-\frac{1}{2}\cos (x_2)}+\arctan\frac{\frac{4}{9}\sin (x_2)}{1-\frac{4}{9}\cos (x_2)}<1.51<\frac{\pi}{2}.$$

On the other hand, since $\alpha \theta_1+\alpha \theta_2+\theta_3+\theta_4\geqslant \theta_1+\theta_2+\theta_3+\theta_4$. We  just need to prove $\theta_1+\theta_2+\theta_3+\theta_4+\theta_5\geqslant -\frac{\pi}{2}$.
That is to say, we need to prove
\begin{equation}\label{ad3.3}
\begin{split}
\Real\left\{\frac{\left(\frac{147}{60}-6z+\frac{15}{2}z^2-\frac{20}{3}z^3+\frac{15}{4}z^4-\frac{6}{5}z^5+\frac{1}{6}z^6\right)}{\left(1-\frac{1}{2}z\right)^2\left(1-\frac{4}{9}z\right)}\right\}\geqslant0.
\end{split}
\end{equation}
Fortunately, the result \eqref{ad3.3} has been proved in Proposition 2.1 of \cite{ACYZ:20}.
The proof is completed.
\end{proof}

\begin{lemma}\label{lemma3}
For any positive integer $m$, it holds that
\begin{equation*}
\sum_{n=1}^{m}\left( \nabla w^n,\nabla w^n-\sum_{j=1}^6 \mu_j \nabla w^{n-j}\right)\geqslant \frac{1}{32}\sum_{n=1}^{m}\|\nabla w^n\|^2.
\end{equation*}
\end{lemma}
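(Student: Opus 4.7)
The plan is to express the left-hand side as a real quadratic form on $\Re^m$ and to invoke Lemmas~\ref{lemma2.4} and~\ref{lemma2.6}. Since $\mu_4=\mu_5=\mu_6=0$, only the three nontrivial multipliers $\mu_1=13/9$, $\mu_2=-25/36$, $\mu_3=1/9$ enter. Let $T_m$ denote the $m\times m$ lower-triangular banded Toeplitz matrix with diagonal entries equal to $1$ and $j$-th subdiagonal equal to $-\mu_j$ for $j=1,2,3$; the starting convention $w^0=w^{-1}=w^{-2}=0$ from~\eqref{3.6} makes this identification exact in the top-left corner as well.

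Freezing $x\in\Omega$ and working component-wise in $\nabla w^n(x)\in\Re^d$, the integrand at $x$ decomposes as $\sum_{i=1}^d (T_m \mathbf{v}_i(x),\mathbf{v}_i(x))$, where $\mathbf{v}_i(x)=(\partial_i w^1(x),\dots,\partial_i w^m(x))^T\in\Re^m$. By Lemma~\ref{lemma2.4} one may replace $T_m$ with its symmetric part $H_m=(T_m+T_m^T)/2$, a symmetric banded Toeplitz matrix whose generating function is
\begin{equation*}
h(x)=1-\mu_1\cos x-\mu_2\cos 2x-\mu_3\cos 3x=1-\tfrac{13}{9}\cos x+\tfrac{25}{36}\cos 2x-\tfrac{1}{9}\cos 3x.
\end{equation*}
Lemma~\ref{lemma2.6} then reduces the claim to the uniform pointwise bound $h(x)\geqslant 1/32$, since summing over $i=1,\dots,d$ and integrating over $\Omega$ recovers $\sum_n\|\nabla w^n\|^2$ on the right.

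The remaining task---and the essential obstacle in my view---is the trigonometric minimization $\min_x h(x)\geqslant 1/32$. I would substitute $c=\cos x\in[-1,1]$ and use the Chebyshev identities $\cos 2x=2c^2-1$, $\cos 3x=4c^3-3c$ to rewrite $h(x)=p(c)/36$ with $p(c)=11-40c+50c^2-16c^3$, so the claim becomes $p(c)\geqslant 9/8$ on $[-1,1]$. The endpoints give $p(-1)=117$ and $p(1)=5$, both comfortably above $9/8$. Solving $p'(c)=-40+100c-48c^2=0$ shows that only one critical point $c^\star=(25-\sqrt{145})/24$ lies in $[-1,1]$; exploiting the relation $48c^{\star 2}=100c^\star-40$ to eliminate $c^{\star 2}$ and $c^{\star 3}$ from $p(c^\star)$ gives the closed form $p(c^\star)=(2377-145\sqrt{145})/432$. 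The bound $p(c^\star)\geqslant 9/8$ then collapses to the integer inequality $145^3\leqslant 1891^2$, verified directly by noting $145^3=3048625<3575881=1891^2$. Because $p(c^\star)\approx 1.46$ versus $9/8=1.125$, the constant $1/32$ is close enough to tight that the explicit evaluation at $c^\star$ cannot be avoided by a cruder bound.
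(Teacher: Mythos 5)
Your proposal is correct and follows essentially the same route as the paper: both arguments reduce the claim to the Grenander--Szeg\"{o} theorem applied to the symmetric part of the banded lower-triangular Toeplitz matrix built from $\mu_1,\mu_2,\mu_3$, convert the generating function into a cubic in $\cos x$, and locate its minimum at $(25-\sqrt{145})/24$. The only (immaterial) differences are that the paper first peels off the $\tfrac{1}{32}\sum_n\|\nabla w^n\|^2$ term and checks positive definiteness of the remainder numerically ($p(s^\star)>0.00932\ldots$), whereas you bound the smallest eigenvalue of the full matrix directly by $1/32$ and certify the minimum exactly via $145^3\leqslant 1891^2$.
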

\begin{proof}
With this notation $\mu_0:=-31/32$, $\mu_1:=13/9$, $\mu_2:=-25/36$, $\mu_3:=1/9$, $\mu_4=\mu_5=\mu_6=0$, it yields
\begin{equation*}
\begin{split}
\sum_{n=1}^{m}\left(\nabla w^n,\nabla w^n-\sum_{j=1}^6 \mu_j \nabla w^{n-j}\right)=\frac{1}{32}\sum_{n=1}^{m}\|\nabla w^n\|^2+\sum_{i,j=1}^m \ell_{i,j}\left(\nabla w^i,\nabla w^j\right).
\end{split}
\end{equation*}
To this end, we introduce the lower triangular Toeplitz matrix $L_2=(\ell_{ij})\in \Re^{m,m}$  with entries
\[\ell_{i,i-j}=-\mu_j, \quad j=0,1,2,3, \quad i=j+1,\dotsc,m,\]
and all other entries equal zero.
According to Definition \ref{De:gen-funct},  the generating function of $(L_2+L_2^T)/2$ is
\begin{equation*}
\begin{split}
h(x)&=\frac{31}{32} -\frac{13}{9}\cos(x)+\frac{25}{36}\cos(2x)-\frac{1}{9}\cos(3x)\\
&=-\frac{4}{9}\cos^3(x)+\frac{25}{18}\cos^2(x)-\frac{10}{9}\cos(x)+\frac{79}{288}, \quad \forall x \in \Re.
\end{split}
\end{equation*}
Hence, we consider the polynomial $p,$
\begin{equation*}
p(s):=-\frac{4}{9}s^3+\frac{25}{18}s^2-\frac{10}{9}s+\frac{79}{288},\quad s\in [-1,1].
\end{equation*}
It is easily seen that $p$ attains its minimum at $s^\star=(25-\sqrt{145})/24$ 
and
\[p(s^\star)>0.009321552602567 > 0.\]
Using Lemma \ref{lemma2.4} and  \ref{lemma2.6}, it implies that $L_2$ is positive definite.  Then we obtain
\begin{equation*}
\begin{split}
\sum_{n=1}^{m}\left( \nabla w^n,\nabla w^n-\sum_{j=1}^6 \mu_j \nabla w^{n-j}\right)\geqslant \frac{1}{32}\sum_{n=1}^{m}\|\nabla w^n\|^2.
\end{split}
\end{equation*}
The proof is completed.
\end{proof}

\begin{lemma}\cite[p.\,14]{Quarteroni:08} (Discrete Gronwall Lemma)\label{lemma5}
Assume that $h_n$ is a non-negative sequence, and that the sequence $\phi_n$ satisfies
\begin{equation*}
\left\{ \begin{array}
 {l@{\quad} l}
\phi_0\leqslant\psi_0,\\
   \phi_n\leqslant\psi_0+\sum_{s=0}^{n-1}p_s+\sum_{s=0}^{n-1}h_s\phi_s,~~n\geqslant1.
 \end{array}
 \right.
\end{equation*}
Then, if $\psi_0\geqslant0$ and $p_n\geqslant0$ for $n\geqslant0$, $\phi_n$ satisfies
\begin{equation*}
\phi_n\leqslant\left(\psi_0+\sum_{s=0}^{n-1}p_s\right)\exp\left(\sum_{s=0}^{n-1}h_s\right),~~n\geqslant1.
\end{equation*}
\end{lemma}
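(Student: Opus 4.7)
The plan is to establish the bound by induction on $n$, after introducing the convenient shorthand $A_n := \psi_0 + \sum_{s=0}^{n-1} p_s$. Since $p_s \geqslant 0$, the sequence $\{A_n\}$ is non-decreasing, and the hypothesis becomes $\phi_n \leqslant A_n + \sum_{s=0}^{n-1} h_s \phi_s$. The target inequality is $\phi_n \leqslant A_n \exp\bigl(\sum_{s=0}^{n-1} h_s\bigr)$.

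First I would dispose of the base case $n=0$: by the empty-sum convention, $A_0 = \psi_0$ and $\exp\bigl(\sum_{s=0}^{-1} h_s\bigr) = 1$, so the hypothesis $\phi_0 \leqslant \psi_0$ delivers exactly the desired bound. Next, assuming the bound holds for every index $j \leqslant n-1$, I would substitute into the right-hand side of the recursive estimate, using the monotonicity $A_j \leqslant A_n$ to factor out $A_n$:
\begin{equation*}
\phi_n \leqslant A_n + \sum_{s=0}^{n-1} h_s A_s \exp\!\left(\sum_{r=0}^{s-1} h_r\right) \leqslant A_n\left(1 + \sum_{s=0}^{n-1} h_s \exp\!\left(\sum_{r=0}^{s-1} h_r\right)\right).
\end{equation*}

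The remaining task is the scalar inequality
\begin{equation*}
1 + \sum_{s=0}^{n-1} h_s \exp\!\left(\sum_{r=0}^{s-1} h_r\right) \leqslant \exp\!\left(\sum_{r=0}^{n-1} h_r\right),
\end{equation*}
which I would obtain from the elementary bound $h_s \leqslant e^{h_s} - 1$ (valid since $h_s \geqslant 0$). Multiplying this by $\exp\bigl(\sum_{r=0}^{s-1} h_r\bigr)$ produces the telescoping identity
\begin{equation*}
h_s \exp\!\left(\sum_{r=0}^{s-1} h_r\right) \leqslant \exp\!\left(\sum_{r=0}^{s} h_r\right) - \exp\!\left(\sum_{r=0}^{s-1} h_r\right),
\end{equation*}
and summing from $s=0$ to $n-1$ collapses the right side to $\exp\bigl(\sum_{r=0}^{n-1} h_r\bigr) - 1$, which is precisely what is needed. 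Combining the two displays closes the induction.

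There is no real obstacle here; the only subtlety to watch is the empty-sum convention (both in the initial step and when $s=0$ makes $\sum_{r=0}^{-1} h_r$ vanish), together with the non-negativity of $p_s$ and $h_s$ which is what legitimises both the monotonicity of $A_n$ and the scalar inequality $h_s \leqslant e^{h_s}-1$.
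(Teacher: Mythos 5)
The paper offers no proof of this lemma; it is quoted directly from the cited textbook of Quarteroni and Valli, so there is nothing in the paper to compare your argument against. Your induction is correct and complete: the base case via the empty-sum convention, the use of $h_s\geqslant 0$ to propagate the inductive hypothesis, the monotonicity of $A_n$ (and its non-negativity, needed when you factor it out), and the telescoping sum built from $h_s\leqslant e^{h_s}-1$ all check out, and this is the standard proof of the discrete Gronwall inequality.
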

The above technical Lemmas play an important role in the error analysis and we shall frequently use the discrete Gronwall Lemma. The well-posedness and the limited regularity of the time-fractional Allen-Cahn equation \eqref{3.1} with $F$ satisfying \eqref{3.3} was studied in \cite{Jin:18} for the nonlinear subdiffusion equation. It is proved in Theorem 3.1 of \cite{Jin:18}  that if $u_0\in H^1_0(\Omega)\cap H^2(\Omega)$, then \eqref{3.1} admits a unique solution $u$  satisfying
\begin{equation}\label{3.10}
u\in C^{\alpha}\left([0,T];L^2(\Omega)\right)\cap C\left([0,T];H^1_0(\Omega)\cap H^2(\Omega)\right),~~ \partial_tu\in L^2(\Omega).
\end{equation}

\section{Error analysis for BDF$6$ SAV schemes}\label{Se:error}
In this section, we shall carry out
error analysis of the BDF$6$ SAV schemes for the time-fractional Allen-Cahn equation described
as in \eqref{3.5}, \eqref{2.5b}, \eqref{2.5c} and \eqref{2.5d}.
We denote hereafter $\bar{e}^n :=\bar{w}^n-w(t_n)=\bar{u}^n-u(t_n),~e^n :=w^n-w(t_n)=u^n-u(t_n),~s^n :=r^n-r(t_n)$.
\begin{theorem} \label{theorem3}	
Given initial condition $\bar{u}^0=u^0=u(0),~r^0=E[u^0]$. Let $\bar{u}^n$ and $u^n$ be computed with
the BDF$6$ SAV schemes \eqref{3.5}, \eqref{2.5b}, \eqref{2.5c} and \eqref{2.5d}.
If $u(t)$, $\partial_t^{\alpha+6}u(t)$ and their Fourier transforms belong to $L_1(\Re)$ and the following conditions hold
\begin{equation*}
\begin{split}
u_0\in H^1_0(\Omega)\cap H^2(\Omega),~~\partial_t^6u\in L^2(0,T;L^2(\Omega)),~~\partial_t^iu\in L^2(0,T;H^1(\Omega)),i=1,2.
\end{split}
\end{equation*}
Then for $n\tau\leqslant T$ and $\tau\leqslant\frac{1}{1+T^2C_0^8}$, we have
\begin{equation*}
\|\bar{e}^n\|_{H^2},\|e^n\|_{H^2}\leqslant Ct_n^{-1}\tau^6,
\end{equation*}
where the constants $C_0$ and $C$  are dependent on $T,\Omega$ and the exact
solution $u$ but are independent of $\tau$.
\end{theorem}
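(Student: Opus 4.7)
My plan is to derive an error equation for $\bar e^n$, test it against the six-step multiplier $v^n:=\bar e^n-\mu_1\bar e^{n-1}-\mu_2\bar e^{n-2}-\mu_3\bar e^{n-3}$ with $\mu_1=13/9$, $\mu_2=-25/36$, $\mu_3=1/9$ as in Lemma~\ref{lemma4}, absorb the nonlinear contribution via the Lipschitz bound in \eqref{3.3}, close the estimate with discrete Gronwall, lift the resulting $H^1$-bound to $H^2$, and finally pass from $\bar e^n$ to $e^n$ using $u^n=\eta^n\bar u^n$.

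Subtracting \eqref{3.5} from \eqref{3.4} at $t=t_n$ yields
\begin{equation*}
\tau^{-\alpha}\Bigl(g_0 \bar e^n+\sum_{j=1}^n g_j e^{n-j}\Bigr)-\Delta \bar e^n+f[B_6(\bar u^{n-1})]-f(u(t_n))=-R_1^n,
\end{equation*}
where $R_1^n:=\bar\partial_\tau^\alpha w(t_n)-\partial_t^\alpha w(t_n)$ is the BDF$6$ convolution-quadrature remainder. A Lubich-type Fourier argument, valid under the hypothesis that $u$ and $\partial_t^{\alpha+6}u$ have $L_1$ Fourier transforms, will give $\|R_1^n\|\leqslant C t_n^{-1}\tau^6$. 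I would handle the nonlinearity by the mean-value theorem, writing $f[B_6(\bar u^{n-1})]-f(u(t_n))=f'(\theta^n)\bigl[B_6(\bar e^{n-1})+(B_6(u(t_{n-1}))-u(t_n))\bigr]$, so that the Lipschitz bound $|f'|\leqslant L$ from \eqref{3.3} together with a Taylor expansion employing $\partial_t^6 u\in L^2(0,T;L^2)$ reduce the nonlinear piece to the numerical pollution $B_6(\bar e^{n-1})$ plus an $\mathcal O(\tau^6)$ extrapolation remainder.

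Next I would convert the mixed convolution $g_0\bar e^n+\sum_{j\geqslant 1}g_j e^{n-j}$ into the clean form $\sum_{j\geqslant 0}g_j\bar e^{n-j}$ plus a defect built from $(1-\eta^{n-j})\bar u^{n-j}$, and then invoke the identity \eqref{3.6} to rewrite it as $\sum q_j v^{n-j}$. Testing the error equation against $v^n$, summing in $n$, and using Lemma~\ref{lemma4} produces nonnegativity of the convolution part, while Lemma~\ref{lemma3} supplies
\begin{equation*}
\sum_{n=1}^{m}\bigl(\nabla \bar e^n,\nabla v^n\bigr)\geqslant \frac{1}{32}\sum_{n=1}^{m}\|\nabla \bar e^n\|^2.
\end{equation*}
After bounding the nonlinear and remainder contributions by Cauchy--Schwarz and Young's inequality and absorbing $\|\nabla\bar e^n\|$-terms into the left-hand side, the Discrete Gronwall Lemma~\ref{lemma5} (with $h_s\sim C\tau$ and $p_s$ the squared truncation terms of order $t_s^{-2}\tau^{12}$) will close an $H^1$ estimate of the form $\|\bar e^n\|_{H^1}\leqslant C t_n^{-1}\tau^6$.

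To upgrade to the $H^2$ norm I would apply $-\Delta$ to the error equation and repeat the multiplier argument with $v^n$ replaced by $-\Delta v^n$, using the homogeneous Dirichlet boundary condition and the $H^1$ regularity $\partial_t^i u\in L^2(0,T;H^1)$, $i=1,2$; alternatively, elliptic regularity $\|\bar e^n\|_{H^2}\leqslant C(\|\Delta\bar e^n\|+\|\bar e^n\|)$ together with an algebraic rearrangement of the error equation delivers $\|\bar e^n\|_{H^2}\leqslant C t_n^{-1}\tau^6$. The relation $u^n=\eta^n\bar u^n$ then gives $e^n=(\eta^n-1)u(t_n)+\eta^n\bar e^n$; the eighth-power structure $\eta^n-1=-(1-\xi^n)^8$ in \eqref{2.5d} together with the SAV stability bound \eqref{2.14} will yield $|\eta^n-1|\leqslant C\tau^6$ as soon as $|1-\xi^n|$ is known to be $\mathcal O(\tau^{3/4})$, and the same order then transfers to $\|e^n\|_{H^2}$. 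The main obstacle I expect is precisely the bootstrap loop coupling the auxiliary-variable error $s^n$, the ratio $\xi^n$, the multiplier $\eta^n$, and the principal error $\bar e^n$: the whole argument must be laid out as a simultaneous induction in $n$, arranged so that the smallness of $|1-\xi^n|$ needed to control the convolution defect at step $n$ is itself a consequence of the error estimate already proved through step $n-1$. The step-size restriction $\tau\leqslant 1/(1+T^2 C_0^8)$ is exactly what is required to keep this bootstrap closed.
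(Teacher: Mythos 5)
Your overall architecture matches the paper's: the same error equation, the same multiplier $v^n=\bar e^n-\tfrac{13}{9}\bar e^{n-1}+\tfrac{25}{36}\bar e^{n-2}-\tfrac19\bar e^{n-3}$ tested via \eqref{3.6}, Lemmas \ref{lemma4} and \ref{lemma3}, a Gronwall closure, the $H^2$ lift by testing against $-\Delta v^n$, and the transfer $e^n=(\eta^n-1)u(t_n)+\eta^n\bar e^n$. But there is a genuine gap at exactly the point you flag as ``the main obstacle I expect'': you never supply the argument that closes the induction on $|1-\xi^m|$. This is the entire Step 3 of the paper's proof and it is not routine. One must write the discrete evolution of $s^n=r^n-r(t_n)$ from \eqref{2.5b} and \eqref{2.2}, split $\mathcal{K}[u(t_n)]-\tfrac{r^n}{E(\bar u^n)}\mathcal{K}(\bar u^n)$ into the pieces $P_1$ and $P_2$ of \eqref{3.49}, control $P_2$ and $|E[u(t_n)]-E(\bar u^n)|$ by the \emph{already established} $H^2$ error bound \eqref{3.41} (note $P_2$ costs a factor $\tau^{-1}$ from the difference quotient, so one lands at $\tau^5$, not $\tau^6$), bound the local consistency term $J^n$ using $\partial_t^i u\in L^2(0,T;H^1)$, and run a \emph{second} discrete Gronwall on $|s^m|$ before converting to $|1-\xi^m|\leqslant C_3\tau((1+T^2C_0^8)\tau+1)\leqslant C_0\tau$ under the stated restriction $\tau\leqslant 1/(1+T^2C_0^8)$. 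Without this the bootstrap is not closed and the theorem is not proved.

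Two quantitative points in your sketch would also fail as written. First, the threshold $|1-\xi^n|=\mathcal{O}(\tau^{3/4})$ (hence $|1-\eta^n|=\mathcal{O}(\tau^6)$) is enough for the final transfer to $e^n$, but it is \emph{not} enough where $\eta$ really matters, namely the convolution defect $\tau^{-\alpha}\sum_{j}g_j(1-\eta^{n-j})\bar u^{n-j}$: after the time summation this term contributes $C\tau^{1-\alpha}\sum_{n=1}^m n\,|1-\eta|\sim CT^2\tau^{-1-\alpha}|1-\eta|$, so $|1-\eta|=\mathcal{O}(\tau^6)$ yields only $\mathcal{O}(\tau^{5-\alpha})$ and destroys the sixth-order rate; the induction hypothesis must be the full $|1-\xi^q|\leqslant C_0\tau$, giving $|1-\eta^q|\leqslant C_0^8\tau^8$ and the bound $CT^2C_0^8\tau^{7-\alpha}\leqslant CT^2C_0^8\tau^6$ of \eqref{3.30}. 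Second, closing the $H^1$ estimate by Young's inequality with squared data $p_s\sim t_s^{-2}\tau^{12}$ gives $\tau\sum_n\|\nabla\bar e^n\|^2\lesssim\tau^{12}$, hence only a uniform bound of order $\tau^{11/2}$ on $\|\nabla\bar e^m\|$, losing both half an order and the $t_n^{-1}$ weight. The paper instead keeps the right-hand side \emph{linear} in the error: it picks $l$ with $\|\nabla\bar e^l\|=\max_{n\leqslant m}\|\nabla\bar e^n\|$, divides once by $\|\nabla\bar e^l\|$, and obtains $t_m\|\nabla\bar e^l\|\leqslant C\tau\sum_n(\|R^n\|+\cdots)$ as in \eqref{3.27}; this is precisely where the factor $t_n^{-1}\tau^6$ in the statement comes from, so you should adopt that device rather than the quadratic Gronwall.
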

\begin{proof}
The main task is to prove
\begin{equation}\label{3.7}	
|1-\xi^q|\leqslant C_0\tau,\forall q\leqslant N,
\end{equation}
where the constant $C_0$ is dependent on $T,\Omega$ and the exact
solution $u$ but is independent of $\tau$, and will
be defined in the proof process. Below we use the mathematical induction to  prove \eqref{3.7}.

It is trivial that the claimed inequality \eqref{3.7} holds for $q=0$. For $1\leqslant m\leqslant N$, assume that
\begin{equation}\label{3.8}	
|1-\xi^q|\leqslant C_0\tau,\forall q\leqslant m-1.
\end{equation}
It remains to prove that
\begin{equation}\label{3.9}	
|1-\xi^m|\leqslant C_0\tau.
\end{equation}
{\bf Step 1}. First, we prove the $H^1$ bound for $\bar{u}^{n-1}$ and $u^{n-1}$ for all $n\leqslant m\leqslant N$. From \eqref{2.13}, we obtain
\begin{equation*}	
\|\nabla u^q\|^2=(\nabla u^q,\nabla u^q)=(\mathcal{L}u^q,u^q)\leqslant M^2,\forall q\leqslant N.
\end{equation*}
According to the induction hypothesis \eqref{3.8}, \eqref{2.5d} and the above inequality, if we choose $\tau$ small enough such that
$\tau\leqslant \frac{1}{2C_0^8}$,
we have
\begin{equation*}
\left|\eta^q\right|=\left|1-(1-\xi^q)^8\right|\geqslant1-|1-\xi^q|^8\geqslant1-\frac{\tau^7}{2},\forall q\leqslant m-1,
\end{equation*}
and
\begin{equation*}	
\|\nabla \bar{u}^q\|\leqslant\left|\eta^q\right|^{-1}\|\nabla u^q\|\leqslant 2M,\forall q\leqslant m-1,\forall \tau\leqslant 1.
\end{equation*}
{\bf Step 2}. Then, we estimate $\|\bar{e}^n\|_{H^2}$ for all $1\leqslant n\leqslant m\leqslant N$. According to \eqref{3.10} and the above inequality, we choose $C$
large enough such that
\begin{equation}\label{3.19}		
\left\|u(t)\right\|_{H^2}\leqslant C,~~\forall t\leqslant T,\quad
\left\|\nabla \bar{u}^q\right\|\leqslant C,~~ \forall q\leqslant m-1.
\end{equation}
From \eqref{3.5} and \eqref{3.4}, we can write down the error equation as
\begin{equation}\label{3.21}		
\tau^{-\alpha}\sum_{j=0}^n g_j\bar{e}^{n-j}-\Delta\bar{e}^n=R^n+\tau^{-\alpha}\sum_{j=1}^n g_j(\bar{u}^{n-j}-u^{n-j})+Q^n,
\end{equation}
where $R^n,~Q^n$ are given by
\begin{equation}\label{3.22}		
R^n=\partial_t^\alpha w(t_n)-\tau^{-\alpha}\sum_{j=0}^n g_j w(t_{n-j}),
\end{equation}
and
\begin{equation}\label{3.23}		
Q^n=f\left[u(t_n)\right]-f\left[B_6(\bar{u}^{n-1})\right].
\end{equation}
Taking the inner product of \eqref{3.21} with $\bar{z}^n=\bar{e}^n-\sum_{i=1}^3\mu_i\bar{e}^{n-i}$, then multiplying by $\tau$ and summing up for $n$ from $1$ to $ m$, we get
\begin{equation*}
\begin{split}		
&\tau^{1-\alpha}\sum_{n=1}^m\left(\sum_{j=0}^{n-1} q_j\bar{z}^{n-j},\bar{z}^n\right)+\tau\sum_{n=1}^m\left(\nabla\bar{e}^n,\nabla\bar{z}^n\right)\\
&=\tau\sum_{n=1}^m\left(R^n+\tau^{-\alpha}\sum_{j=1}^n g_j(\bar{u}^{n-j}-u^{n-j})-Q^n,\bar{z}^n\right),
\end{split}
\end{equation*}
where \eqref{3.6} is employed on the first term of the left hand side.
According to Lemma \ref{lemma4}, it implies
\begin{equation*}	
\begin{split}
&\tau\sum_{n=1}^m\left(\nabla\bar{e}^n,\nabla\bar{e}^n-\sum_{i=1}^3\mu_i\nabla\bar{e}^{n-i}\right)\\
&\leqslant\tau\sum_{n=1}^m\left(\left\|R^n\right\|+\tau^{-\alpha}\left\|\sum_{j=1}^n g_j(\bar{u}^{n-j}-u^{n-j})\right\|+\left\|Q^n\right\|\right)\left(\left\|\bar{e}^n\right\|+\sum_{i=1}^3|\mu_i|\left\|\bar{e}^{n-i}\right\|\right).
\end{split}
\end{equation*}
According to Lemma \ref{lemma3} and \eqref{2.5d}, we obtain
\begin{equation*}
\begin{split}		
&\frac{1}{32}\tau\sum_{n=1}^m\left\|\nabla\bar{e}^n\right\|^2\\
&\leqslant C\tau\sum_{n=1}^m\left(\left\|R^n\right\|+\tau^{-\alpha}\left\|\sum_{j=1}^n g_j(1-\eta^{n-j})\bar{u}^{n-j}\right\|+\left\|Q^n\right\|\right)\left(\left\|\nabla\bar{e}^n\right\|+\sum_{i=1}^3|\mu_i|\|\nabla\bar{e}^{n-i}\|\right).
\end{split}
\end{equation*}
Suppose $l$ is chosen so that $\|\nabla\bar{e}^l\|=\max \limits_{1\leqslant n\leqslant m}\|\nabla\bar{e}^n\|$. Then
\begin{equation*}
\begin{split}		
&\frac{1}{32}m\tau \left\|\nabla\bar{e}^l\right\|^2\\
&\leqslant C\tau\sum_{n=1}^m\left(\left\|R^n\right\|+\tau^{-\alpha}\left\|\sum_{j=1}^n g_j(1-\eta^{n-j})\bar{u}^{n-j}\right\|+\left\|Q^n\right\|\right)\left(1+\sum_{i=1}^3|\mu_i|\right)\left\|\nabla\bar{e}^l\right\|,
\end{split}
\end{equation*}
whence
\begin{equation}\label{3.27}
\begin{split}		
\frac{1}{32}t_m \left\|\nabla\bar{e}^l\right\|
\leqslant C\tau\sum_{n=1}^m\left(\left\|R^n\right\|+\tau^{-\alpha}\left\|\sum_{j=1}^n g_j(1-\eta^{n-j})\bar{u}^{n-j}\right\|+\left\|Q^n\right\|\right).
\end{split}
\end{equation}
In the following, we bound the right hand side of \eqref{3.27}. From \eqref{3.22} and \cite{Chendeng:13}, we get
\begin{equation}\label{3.28}		
\tau\sum_{n=1}^m\left\|R^n\right\|\leqslant C\tau\sum_{n=1}^m\left\|\mathcal{F}[\partial_t^{\alpha+6}w]\right\|_{L^1}\cdot\tau^6 \leqslant C T\tau^6\left\|\mathcal{F}[\partial_t^{\alpha+6}w]\right\|_{L^1}.
\end{equation}
From \eqref{2.5d} and the induction assumption \eqref{3.8}, we obtain
\begin{equation*}
\left|\eta^q-1\right|=\left|1-\xi^q\right|^8\leqslant C_0^8\tau^8, ~~\forall q\leqslant n-1.
\end{equation*}
According to the above inequality and \eqref{3.19}, it yields
\begin{equation} \label{3.30}
\begin{split}	
\tau\sum_{n=1}^m\tau^{-\alpha}\left\|\sum_{j=1}^n g_j(1-\eta^{n-j})\bar{u}^{n-j}\right\|&\leqslant C\tau\sum_{n=1}^m\tau^{-\alpha}C_0^8\tau^8\sum_{j=1}^n \left\|\nabla\bar{u}^{n-j}\right\|\\
 &\leqslant CC_0^8\tau^{9-\alpha}\sum_{n=1}^m n\leqslant CT^2C_0^8\tau^6.
\end{split}	
\end{equation}
From \eqref{3.23} and \eqref{3.3}, we derive
\begin{equation*}
\begin{split}	
\left|Q^n\right|&\leqslant\left|f\left[B_6(\bar{u}^{n-1})\right]-f\left[B_6(u(t_{n-1}))\right]\right|+\left|f\left[B_6(u(t_{n-1}))\right]-f\left[u(t_n)\right]\right|\\
&\leqslant L\left|B_6(\bar{e}^{n-1})\right|+L\left|B_6(u(t_{n-1}))-u(t_{n})\right|\\
&=L\left|B_6(\bar{e}^{n-1})\right|+L\left|\sum_{i=1}^6b_i\int_{t^{n-i}}^{t^n}\left(t^{n-i}-s\right)^5\partial_t^6u(s)ds\right|,
\end{split}
\end{equation*}
where
$b_1=-\frac{6}{5!},b_2=\frac{15}{5!},b_3=-\frac{20}{5!},b_4=\frac{15}{5!},b_5=-\frac{6}{5!},b_6=\frac{1}{5!}$ are determined by Taylor expansion.
\begin{equation}\label{3.32}	
\tau\sum_{n=1}^m\left\|Q^n\right\|\leqslant C\tau\sum_{n=1}^m\left\|\bar{e}^{n-1}\right\|+C\tau^6\int_{0}^{T}\left\|\partial_t^6u(s)\right\|ds.
\end{equation}
Now, combining \eqref{3.27}, \eqref{3.28}, \eqref{3.30}, \eqref{3.32}, we get
\begin{equation}\label{3.33}
\begin{split}		
t_m\left\|\nabla\bar{e}^m\right\|
\leqslant C\tau\sum_{n=1}^m\left\|\nabla\bar{e}^{n-1}\right\|\!+\!C\tau^6\left(T\left\|\mathcal{F}[\partial_t^{\alpha+6}w]\right\|_{L^1}\!+\!T^2C_0^8\!+\!\int_{0}^{T}\left\|\partial_t^6u(s)\right\|ds\right).
\end{split}
\end{equation}
Similarly, the estimate for $\left\|\Delta\bar{e}^{m}\right\|$ can be obtained by using the same procedure. In fact,
taking the inner product of \eqref{3.21} with $\bar{v}^n=-\Delta\bar{e}^n+\sum_{i=1}^3\mu_i\Delta\bar{e}^{n-i}$, then multiplying by $\tau$ and summing up for $n$ from $1$ to $ m$, we get
\begin{equation*}
\begin{split}		
&\tau^{1-\alpha}\sum_{n=1}^m\left(\sum_{j=0}^{n-1} q_j\nabla\bar{z}^{n-j},\nabla\bar{z}^n\right)+\tau\sum_{n=1}^m\left(-\Delta\bar{e}^n,\bar{v}^n\right)\\
&=\tau\sum_{n=1}^m\left(R^n+\tau^{-\alpha}\sum_{j=1}^n g_j(\bar{u}^{n-j}-u^{n-j})-Q^n,\bar{v}^n\right),
\end{split}
\end{equation*}
where \eqref{3.6} is utilized on the first term of the left hand side.
According to Lemma \ref{lemma4}, it yields
\begin{equation*}
\begin{split}		
&\tau\sum_{n=1}^m\left(-\Delta\bar{e}^n,-\Delta\bar{e}^n+\sum_{i=1}^3\mu_i\Delta\bar{e}^{n-i}\right)\\
&\leqslant \tau\sum_{n=1}^m\left(\left\|R^n\right\|+\tau^{-\alpha}\left\|\sum_{j=1}^n g_j(\bar{u}^{n-j}-u^{n-j})\right\|+\left\|Q^n\right\|\right)\left(\left\|\Delta\bar{e}^n\right\|+\sum_{i=1}^3|\mu_i|\|\Delta\bar{e}^{n-i}\|\right).
\end{split}
\end{equation*}
According to Lemma \ref{lemma3} and \eqref{2.5d}, it implies
\begin{equation*}
\begin{split}		
&\frac{1}{32}\tau\sum_{n=1}^m\left\|\Delta\bar{e}^n\right\|^2\\
&\leqslant \tau\sum_{n=1}^m\left(\left\|R^n\right\|+\tau^{-\alpha}\left\|\sum_{j=1}^n g_j(1-\eta^{n-j})\bar{u}^{n-j}\right\|+\left\|Q^n\right\|\right)\left(\left\|\Delta\bar{e}^n\right\|+\sum_{i=1}^3|\mu_i|\|\Delta\bar{e}^{n-i}\|\right).
\end{split}
\end{equation*}
Suppose $l$ is chosen so that $\|\Delta\bar{e}^l\|=\max \limits_{1\leqslant n\leqslant m}\|\Delta\bar{e}^n\|$. Then
\begin{equation*}
\begin{split}		
&\frac{1}{32}m\tau \left\|\Delta\bar{e}^l\right\|^2\\
&\leqslant \tau\sum_{n=1}^m\left(\left\|R^n\right\|+\tau^{-\alpha}\left\|\sum_{j=1}^n g_j(1-\eta^{n-j})\bar{u}^{n-j}\right\|+\left\|Q^n\right\|\right)\left(1+\sum_{i=1}^3|\mu_i|\right)\left\|\Delta\bar{e}^l\right\|,
\end{split}
\end{equation*}
whence
\begin{equation*}
\begin{split}		
\frac{1}{32}t_m \left\|\Delta\bar{e}^l\right\|
\leqslant C\tau\sum_{n=1}^m\left(\left\|R^n\right\|+\tau^{-\alpha}\left\|\sum_{j=1}^n g_j(1-\eta^{n-j})\bar{u}^{n-j}\right\|+\left\|Q^n\right\|\right).
\end{split}
\end{equation*}
Combining \eqref{3.28}, \eqref{3.30}, \eqref{3.32}, it yields
\begin{equation*}
\begin{split}		
t_m\left\|\Delta\bar{e}^m\right\|
\leqslant C\tau\sum_{n=1}^m\left\|\Delta\bar{e}^{n-1}\right\|+C\tau^6\left(T\left\|\mathcal{F}[\partial_t^{\alpha+6}w]\right\|_{L^1}+T^2C_0^8+\int_{0}^{T}\left\|\partial_t^6u(s)\right\|ds\right).
\end{split}
\end{equation*}
From \eqref{3.33} and the above inequality, we derive
\begin{equation*}
\begin{split}		
\left\|\bar{e}^m\right\|_{H^2}
\leqslant Ct_m^{-1}\tau\sum_{n=0}^{m-1}\left\|\bar{e}^{n}\right\|_{H^2}\!+\!Ct_m^{-1}\tau^6\left(T\left\|\mathcal{F}[\partial_t^{\alpha+6}w]\right\|_{L^1}\!+\!T^2C_0^8\!+\!\int_{0}^{T}\left\|\partial_t^6u(s)\right\|ds\!\right).
\end{split}
\end{equation*}
Applying the discrete Gronwall Lemma \ref{lemma5} to the above inequality, we get
\begin{equation*}
\begin{split}		
\left\|\bar{e}^m\right\|_{H^2}
&\leqslant Ct_m^{-1}\tau^6\left(T\left\|\mathcal{F}[\partial_t^{\alpha+6}w]\right\|_{L^1}+T^2C_0^8+\int_{0}^{T}\left\|\partial_t^6u(s)\right\|ds\right)\\
&\leqslant C_2\left(1+T^2C_0^8\right)t_m^{-1}\tau^6,
\end{split}
\end{equation*}
where $C_2:=C\max\left(T\left\|\mathcal{F}[\partial_t^{\alpha+6}w]\right\|_{L^1}+\int_{0}^{T}\left\|\partial_t^6u(s)\right\|ds,1\right)$ is independent of $\tau$ and $C_0$.
In particular, the above inequality implies
\begin{equation}\label{3.41}
\begin{split}		
\left\|\bar{e}^n\right\|_{H^2}
\leqslant C_2\left(1+T^2C_0^8\right)t_n^{-1}\tau^6,~~\forall 1\leqslant n \leqslant m.
\end{split}
\end{equation}
Combining \eqref{3.19} and \eqref{3.41}, we obtain
\begin{equation}\label{3.42}
\begin{split}		
\left\|\bar{u}^n\right\|_{H^2}
\leqslant C_2\left(1+T^2C_0^8\right)t_n^{-1}\tau^6+C\leqslant C_2\left(1+T^2C_0^8\right)+C:=\bar{C},~~\forall \tau\leqslant1.
\end{split}
\end{equation}
{\bf Step 3}. Next, we estimate $\left|1-\xi^m\right|$. By direct calculation,
\begin{equation}\label{3.44}
r_{tt}=\int_\Omega\left(\left|\nabla u_t\right|^2+\nabla u\cdot\nabla u_{tt}+f'(u)u_t^2+f(u)u_{tt}\right)dx.
\end{equation}
From \eqref{2.5d} and \eqref{2.2}, it yields
\begin{equation}\label{3.45}
s^n-s^{n-1}=\tau\left(\mathcal{K}[u(t_n)]-\frac{r^n}{E(\bar{u}^n)}\mathcal{K}(\bar{u}^n)\right)+J^n,
\end{equation}
where
\begin{equation}\label{3.46}
\begin{split}
\mathcal{K}[u(t_n)]&=-\int_\Omega\left(-\Delta u(t_n)+f[u(t_n)]\right)\left(\frac{u(t_n)-u(t_{n-1})}{\tau}+\mathcal{O}(\tau)\right)dx\\
\mathcal{K}(\bar{u}^n)&=-\int_\Omega\left(-\Delta \bar{u}^n+f(\bar{u}^n)\right)\frac{\bar{u}^n-\bar{u}^{n-1}}{\tau}dx\\
J^n&=r(t_{n-1})-r(t_{n})+\tau r_t(t_n)=\int_{t_{n-1}}^{t_n}\left(s-t_{n-1}\right)r_{tt}(s)ds.
\end{split}
\end{equation}
Taking the sum of \eqref{3.45} for $n$ from $1$ to $m$ and noting  $s^0=0$, we have
\begin{equation}\label{3.47}
s^m=\tau\sum_{n=1}^{m}\left(\mathcal{K}[u(t_n)]-\frac{r^n}{E(\bar{u}^n)}\mathcal{K}(\bar{u}^n)\right)+\sum_{n=1}^{m}J^n.
\end{equation}
Now, we bound the terms on the right hand side of \eqref{3.47}. From \eqref{3.46}, \eqref{3.44}, \eqref{3.3} and \eqref{3.19}, we have
\begin{equation}\label{3.48}
\left|J^n\right|\leqslant C\tau\int_{t_{n-1}}^{t_n}\left|r_{tt}\right|ds\leqslant C\tau\int_{t_{n-1}}^{t_n}\left(\left\|u_t(s)\right\|_{H^1}^2+\left\|u_{tt}(s)\right\|_{H^1}\right)ds.
\end{equation}
Next,
\begin{equation}\label{3.49}
\begin{split}
&\left|\mathcal{K}[u(t_n)]-\frac{r^n}{E(\bar{u}^n)}\mathcal{K}(\bar{u}^n)\right|\\
&\leqslant \mathcal{K}[u(t_n)]\left|1-\frac{r^n}{E(\bar{u}^n)}\right|+\frac{r^n}{E(\bar{u}^n)}\left|\mathcal{K}[u(t_n)]-\mathcal{K}(\bar{u}^n)\right|:=P_1+
P_2.
\end{split}
\end{equation}
From \eqref{3.49}, \eqref{3.46}, \eqref{3.10}, $E(v)>\b{C}>0,~\forall v$ and \eqref{2.12}, it holds
\begin{equation}\label{3.50}
\begin{split}
P_1&=\mathcal{K}[u(t_n)]\left|1-\frac{r^n}{E(\bar{u}^n)}\right|\leqslant C\left|1-\frac{r^n}{E(\bar{u}^n)}\right|\\
&\leqslant C\left|\frac{r(t_n)}{E\left[u(t_n)\right]}-\frac{r^n}{E\left[u(t_n)\right]}\right|+ C\left|\frac{r^n}{E\left[u(t_n)\right]}-\frac{r^n}{E(\bar{u}^n)}\right|\\
&\leqslant C\left(\left|s^n\right|+\left|E\left[u(t_n)\right]-{E(\bar{u}^n)}\right|\right).
\end{split}
\end{equation}
According to \eqref{3.49}, \eqref{2.12}, $E(v)>\b{C}>0,~\forall v$, \eqref{3.46}, \eqref{3.3},  \eqref{3.42} and \eqref{3.10}, we derive
\begin{equation}\label{3.51}
\begin{split}
P_2&=\frac{r^n}{E(\bar{u}^n)}\left|\mathcal{K}[u(t_n)]-\mathcal{K}(\bar{u}^n)\right|\leqslant C\left|\mathcal{K}[u(t_n)]-\mathcal{K}(\bar{u}^n)\right|\\
&\leqslant C\int_\Omega\left|\left(-\Delta\bar{e}^n+f(\bar{u}^n)-f[u(t_n)]\right)\frac{\bar{u}^n-\bar{u}^{n-1}}{\tau}\right|dx\\
& \quad +C\int_\Omega\left|\left(-\Delta u(t_n)+f[u(t_n)]\right)\left(\frac{\bar{e}^n-\bar{e}^{n-1}}{\tau}+\mathcal{O}(\tau)\right)\right|dx\\
&\leqslant C\tau^{-1}\left(\left\|\Delta\bar{e}^n\right\|+\left\|\bar{e}^n\right\|\right)\leqslant CC_2\left(1+T^2C_0^8\right)t_n^{-1}\tau^5.
\end{split}
\end{equation}
On the other hand,
\begin{equation}\label{3.52}
\begin{split}
\left|E\left[u(t_n)\right]-{E(\bar{u}^n)}\right|&\leqslant\frac{1}{2}\left(\left\|\nabla u(t_n)\right\|+\left\|\nabla \bar{u}^n\right\|\right)\left\|\nabla u(t_n)-\nabla \bar{u}^n\right\|\\
& \quad+\int_\Omega \left|F[u(t_n)]-F(\bar{u}^n)\right|dx\\
&\leqslant C\left(\left\|\nabla\bar{e}^n\right\|+\left\|\bar{e}^n\right\|\right)\leqslant CC_2\left(1+T^2C_0^8\right)t_n^{-1}\tau^6.
\end{split}
\end{equation}
From \eqref{3.47}, \eqref{3.48}, \eqref{3.49}, \eqref{3.50}, \eqref{3.51} and \eqref{3.52}, we derive
\begin{equation*}
\begin{split}
\left|s^m\right|&\leqslant \tau\sum_{n=1}^{m}\left|\mathcal{K}(u(t_n))-\frac{r^n}{E(\bar{u}^n)}\mathcal{K}(\bar{u}^n)\right|+\sum_{n=1}^{m}\left|J^n\right|\\
&\leqslant C\tau\sum_{n=1}^{m}\left|s^n\right|+CC_2\left(1+T^2C_0^8\right)\tau^4+C\tau\int_0^T\left(\left\|u_t(s)\right\|_{H^1}^2+\left\|u_{tt}(s)\right\|_{H^1}\right)ds\\
&\leqslant C\tau\sum_{n=1}^{m-1}\left|s^n\right|+CC_2\left(1+T^2C_0^8\right)\tau^4+C\tau.
\end{split}
\end{equation*}
Applying the discrete Gronwall lemma to the above inequality, we obtain
\begin{equation}\label{3.53}
\begin{split}
\left|s^m\right|\leqslant CC_2\left(1+T^2C_0^8\right)\tau+C\tau.
\end{split}
\end{equation}
From \eqref{2.5c}, \eqref{3.50}, \eqref{3.52}, \eqref{3.53}, we have
\begin{equation*}
\begin{split}
\left|1-\xi^m\right|&=\left|1-\frac{r^m}{E(\bar{u}^m)}\right|\leqslant C\left(\left|E\left[u(t_m)\right]-{E(\bar{u}^m)}\right|+\left|s^m\right|\right)\\
&\leqslant CC_2\left(1+T^2C_0^8\right)\tau^5+CC_2\left(1+T^2C_0^8\right)\tau^4+C\tau\\
&\leqslant C_3\tau \left(\left(1+T^2C_0^8\right)\tau^3+1\right)\leqslant C_3 \tau \left(\left(1+T^2C_0^8\right)\tau+1\right)\leqslant C_0\tau.
\end{split}
\end{equation*}
where we choose $C_0=2C_3$, $\tau\leqslant\frac{1}{1+T^2C_0^8}$ and the constant $C_3$ is independent of $C_0$ and $\tau$.
The induction process for \eqref{3.7} is finished.

 Finally,  it remains to show $\left\|e^m\right\|_{H^2}\leqslant Ct_m^{-1}\tau^6$.
From \eqref{2.5d} and \eqref{3.42}, we derive
\begin{equation*}
\left\|u^m-\bar{u}^m\right\|_{H^2}\leqslant \left|\eta^m-1\right|\left\|\bar{u}^m\right\|_{H^2}\leqslant\left|\eta^m-1\right|\bar{C}.
\end{equation*}
From \eqref{2.5d} and \eqref{3.7}, it yields
\begin{equation*}
\left|\eta^m-1\right|=\left|1-\xi^m\right|^8\leqslant C_0^8\tau^8.
\end{equation*}
According to the triangle inequality, \eqref{3.41} and the above inequality, we obtain
\begin{equation*}
\left\|e^m\right\|_{H^2}\leqslant \left\|\bar{e}^m\right\|_{H^2}+\left\|u^m-\bar{u}^m\right\|_{H^2}\leqslant C_2\left(1+T^2C_0^8\right)t_m^{-1}\tau^6+\bar{C}C_0^8\tau^8\leqslant C t_m^{-1}\tau^6.
\end{equation*}
The proof is completed.
\end{proof}
\begin{remark}
Without detailed proof, a similar result for the usual Allen-Cahn equation with BDF$6$  SAV schemes  \cite{Huangg:20} can be obtained.
\end{remark}
\section{Numerical experiments}
We numerically verify the above theoretical results including convergent order by the $l_\infty$ norm and the discrete $L^2$-norm.
Without loss of generality, we add a force term on the right hand side of \eqref{3.1}.  In the test, we use the Legendre-Galerkin method \cite{Shen:11} with 50 modes  for space
discretization so that the spatial discretization error is negligible compared with the time discretization
error.
\begin{example}\label{example6.1}
Consider the one-dimensional time-fractional Allen-Cahn equation \eqref{3.1} on a finite domain $\Omega=(-1,1)$
 with the  initial condition  $u(x,0)=0.1(1-x^2)$  and the homogeneous Dirichlet boundary condition $u(1,t)=u(-1,t)=0$. The forcing function is chosen such that the exact solution is $u(x,t)=(t^{10}+0.1)(1-x^2)$.
\end{example}
\begin{table}[h]\fontsize{9.5pt}{12pt}\selectfont
 \begin{center}
  \caption {The $l_{\infty}$ norm and discrete $L^{2}$-norm  for  BDF$6$ SAV schemes.} \vspace{5pt}
\begin{tabular*}{\linewidth}{@{\extracolsep{\fill}}*{10}{c}}                   \hline             
&&&$l_{\infty}$ norm\\
$\tau$& $\alpha=0.4$&  Rate       & $\alpha=0.6$  & Rate       & $\alpha=0.8$ &   Rate    \\\hline
~1/200&   5.2278e-10  &       & 3.4894e-10     &      & 2.2080e-10    &  \\
~1/300&   4.7257e-11  &  5.9279     & 3.1804e-11     & 5.9076     & 2.0415e-11    & 5.8722  \\
~1/400&   8.5294e-12  &  5.9513     & 5.7445e-12     & 5.9488     & 3.7070e-12    & 5.9303  \\
~1/500&   2.2387e-12  &  5.9945     & 1.5159e-12     & 5.9703     & 9.7833e-13    & 5.9698  \\ \hline
&&&discrete $L^{2}$-norm\\
$\tau$& $\alpha=0.4$&  Rate       & $\alpha=0.6$  & Rate       & $\alpha=0.8$ &   Rate    \\\hline
~1/200&   4.7254e-10  &       & 3.0054e-10     &      & 1.7967e-10    &  \\
~1/300&   4.2775e-11  &  5.9245     & 2.7482e-11     & 5.8995     & 1.6721e-11    & 5.8561  \\
~1/400&   7.7276e-12  &  5.9481     & 4.9696e-12     & 5.9447     & 3.0464e-12    &  5.9187  \\
~1/500&   2.0296e-12  &  5.9915     & 1.3156e-12     & 5.9560     & 8.0648e-13    &  5.9560  \\ \hline
    \end{tabular*}\label{table:1}
  \end{center}
\end{table}

From Table \ref{table:1}, we observe the expected convergence rate of BDF6 SAV schemes \eqref{3.5}, \eqref{2.5b}, \eqref{2.5c} and \eqref{2.5d}, which is consistent with the theoretical analysis.

%
%
\section*{Acknowledgments}
The first author wishes to thank Jie Shen  for his valuable comments.

%
%
%
%
%
%

\bibliographystyle{amsplain}

\end{document}